\documentclass[11pt]{amsart}
\usepackage[utf8]{inputenc}
\usepackage[T1]{fontenc}

\usepackage{amsmath,amssymb,amscd,graphicx, color,tensor}

\makeatletter
\@namedef{subjclassname@2020}{2020 Mathematics Subject Classification}
\makeatother

\voffset=0.0truein \hoffset=-0.75truein
\setlength{\textwidth}{6.2in} \setlength{\textheight}{8.8in}
\setlength{\topmargin}{-0.2in} \hfuzz5pt

\usepackage{hyperref}
 \hypersetup{%
  colorlinks=true,
  linkcolor=black,
  citecolor=black,
  urlcolor=blue,
  pdftitle={Stationary states of a chemotaxis consumption system with singular sensitivity and inhomogeneous boundary conditions},
  pdfauthor={Ahn, Lankeit},
  pdfkeywords={},
  bookmarksopen=false,
}
\usepackage{colonequals}

\newcommand{\R}{{ \mathbb{R}  }}

\newcommand{\bke}[1]{\left( #1 \right)}

\newcommand{\f}[2]{\frac{#1}{#2}}
\newcommand{\io}{\int_{\Omega}}
\newcommand{\Om}{\Omega}
\newcommand{\Ombar}{\overline{\Omega}}
\newcommand{\jnorm}[2][]{\left\Vert#2\right\Vert_{#1}}
\newcommand{\nn}{\nonumber}
\newcommand{\set}[1]{\{#1\}}
\newcommand{\Lom}[1]{L^{#1}(\Om)}

\newcommand{\uv}[1][α]{\underline{v_{#1}}}
\newcommand{\uz}[1][α]{\underline{z_{#1}}}

\usepackage{newunicodechar}
\newunicodechar{∂}{\partial}
\newunicodechar{ℝ}{\mathbb{R}} 
\newunicodechar{θ}{\theta}
\newunicodechar{α}{\alpha}
\newunicodechar{∞}{\infty}
\newunicodechar{χ}{\chi}
\newunicodechar{β}{\beta}
\newunicodechar{Δ}{\Delta}
\newunicodechar{∇}{\nabla}
\newunicodechar{ε}{\varepsilon}
\newunicodechar{ψ}{\psi}
\newunicodechar{φ}{\varphi}
\newunicodechar{γ}{\gamma}
\newunicodechar{σ}{\sigma}
\newunicodechar{ν}{\nu}
\newunicodechar{ρ}{\rho}
\newunicodechar{ω}{\omega}
\newunicodechar{δ}{\delta}

\everymath{\displaystyle}

\begin{document}

\newtheorem{defn}{Definition}
\newtheorem{lemma}{Lemma}
\newtheorem{proposition}{Proposition}
\newtheorem{theorem}{Theorem}
\newtheorem{assumption}{Assumption}
\newtheorem{cor}{Corollary}
\newtheorem{remark}{Remark}
\numberwithin{equation}{section}

\title[ ]{Stationary states of a chemotaxis consumption system with singular sensitivity and inhomogeneous boundary conditions}
\subjclass[2020]{35J25; 92C17;  35Q92}
\keywords{chemotaxis, stationary state, inhomogeneous Dirichlet boundary conditions, uniqueness, classical solvability}

\author{Jaewook Ahn}%
\address{Department of Mathematics, Dongguk University, Seoul, 04620, Republic of Korea\\
ORCID: 0000-0003-3425-9830}
\email{ jaewookahn@dgu.ac.kr}

\author{Johannes Lankeit}
\address{Leibniz Universität Hannover, Institut für Angewandte Mathematik, Welfengarten 1, 30167 Hannover, Germany\\
ORCID:  0000-0002-2563-7759}
\email{lankeit@ifam.uni-hannover.de}

\begin{abstract}
For given total mass $m>0$ we show unique solvability of the stationary chemotaxis-consumption model 
\[
 \begin{cases}
  0= \Delta u - \chi \nabla \cdot (\frac{u}{v} \nabla v)\\
  0= \Delta v - uv\\
  \int_\Omega u = m
 \end{cases}
\]
under no-flux-Dirichlet boundary conditions in bounded smooth domains $\Omega\subset \mathbb{R}^2$ and $\Omega=B_R(0)\subset \mathbb{R}^d$, $d\ge 3$.
 \end{abstract}
  \maketitle
 
 \section{Introduction}
  When chemotaxis directs motion (eg. of bacteria) toward higher concentrations of oxygen (or other sources of nutrient), the mathematical description of this phenomenon results in chemotaxis-consumption models, in a typical form given by 
 \begin{align}\label{general-consumptionsystem}
  u_t &= Δu - ∇\cdot(uS(v)∇v)\\
  v_t &= Δv - uv, \nn
 \end{align}
 where the density of cells is denoted by $u$ and the concentration of oxygen by $v$, and where $S$ is the chemotactic sensitivity. 

 Especially in the wake of experiments showing the emergence of patterns and large-scale structures in suspensions 
 of chemotactically active bacteria of the species \textit{B. subtilis} in drops of water \cite{tuval_etal}, the mathematical analysis of such systems has flourished (see the survey \cite{lanwin_consumptionsurvey}).

 However, it turned out that if combined with no-flux boundary conditions (the usual choice in most of said studies), \eqref{general-consumptionsystem} does not seem to adequately account for any large-time behaviour beyond convergence to constant states. Consequently, different boundary conditions for the signal $v$ were included, in numerical experiments already in \cite{tuval_etal}; in analytical treatments mainly starting from \cite{Braukhoff17}, with the main options being 
 Robin-type conditions (see \cite{Braukhoff17,BL19}) or (as a limiting case, cf. eg. \cite[Prop. 5.1]{BL19}) Dirichlet conditions. 
 
 With these boundary conditions, results on the fully dynamic system \eqref{general-consumptionsystem} are mostly concerned with existence of solutions, in classical, weak or generalized sense \cite{black_win,black_wu,Braukhoff17,braukhoff_tang,LW22,tian_xiang_jde,tian_xiang_jmfm,wang_win_xiang_1,wang_win_xiang_2,wang_win_xiang_3,wu_xiang}. Some convergence results like that in \cite{tian_xiang_jde,wu_xiang} again rely on the absence of oxygen influx, only rarely is convergence to a stationary state proven, see eg. \cite{FLM21,yang2024long}, or \cite{hong_wang} for a one-dimensional result. 

 Nevertheless, the stationary states on their own appear to be worthwhile objects of study. For classical, production-type chemotaxis systems with Neumann boundary conditions, their investigation goes back at least to the 1980's (\cite{schaaf}), for systems such as \eqref{general-consumptionsystem}, results are more recent: 
 If $S(v)=1$, their existence and uniqueness has been shown in \cite{BL19} for Robin and \cite{LW22,LWY20} for Dirichlet boundary conditions or \cite{knosalla_wrobel} if additional logistic source terms are included, and local asymptotic stability results for the stationary solutions from \cite{LW22} have been obtained in \cite{li_li}. Boundary layers arise if the diffusion coefficient in the signal equation vanishes, see \cite{LWY20} in the case of Dirichlet conditions and \cite{hou_dcds} for Robin boundary conditions and under assumptions of radial symmetry.
 
 Related aerotaxis systems with certain bounded and sign-changing functions $S$ have been investigated in \cite{knosalla,knosalla_nadzieja}. 

In this article we study stationary states of \eqref{general-consumptionsystem} with the choice $S(v)=\frac{χ}{v}$, for some constant $χ>0$, as chemotactic sensitivity, which accounts for the Weber-Fechner law of stimulus perception and is an important ingredient of models that are able to yield migrating bands of bacteria (see \cite{keller_segel_traveling}).

For Robin boundary conditions and in a radially symmetric setting, an analytic solution was obtained in \cite{HLW21}, together with a result on boundary layers, which also occur if Dirichlet conditions are imposed on both components, as shown for the radially symmetric setting in \cite{hou_jmfm}. 

\section{Results}
 We consider the  chemotaxis-consumption model 
     \begin{equation}\label{MODEL_0}
 \left\{
\begin{array}{ll}\vspace{1mm}
u_t =\nabla \cdot(\nabla u- \chi u\nabla \log v),\qquad
v_{t}=\Delta v-uv,\qquad\quad& x\in\Omega,\,\, t>0, \\\vspace{1mm}
\displaystyle   (\nabla  u-  \chi u \nabla \log v)\cdot\nu=0,\qquad\,\,\,\,\, v=v^{*},\qquad\quad& x\in\partial\Omega,\,\, t>0,\\
\end{array}
\right. 
\end{equation}
 in a bounded smooth domain   $\Omega\subset \R^{d}$, $d\ge1$. Here, $\nu$ denotes the unit outward normal to $\partial\Omega$,    $0<\chi\in\R$, and $0<v^{*}\in  C^{2+\theta}(\partial \Omega)$ with some $\theta\in(0,1)$.
We establish the existence and uniqueness of the steady state of \eqref{MODEL_0}.
\begin{theorem}\label{THM1}
 Let $\Omega\subset \R^{d}$, $d\ge1$, be a bounded smooth domain. If $d\ge3$ assume that $\Om=B_R(0)$ for some $R>0$. Moreover,  let $0<\chi\in\R$, and $0<v^{*}\in  C^{2+\theta}(\partial \Omega)$ with some $\theta\in(0,1)$.
 For any positive number $m$, the steady state problem
  \begin{equation}\label{SSP}
  \left\{
\begin{array}{ll}\vspace{1mm}
0=\nabla \cdot(\nabla u_{\infty}-  \chi u_{\infty} \nabla \log v_{\infty}   ),\qquad
0=\Delta v_{\infty}-u_{\infty}v_{\infty},\qquad\quad& x\in\Omega, \\\vspace{1mm}
    (\nabla  u_{\infty}-    \chi u_{\infty} \nabla \log v_{\infty}  )\cdot\nu=0,\qquad\,\,\,\,\, v_{\infty}=v^{*},\qquad\quad& x\in\partial\Omega,
\end{array}
\right. 
\end{equation}
admits a positive unique solution $(u_{\infty},v_{\infty})\in(\mathcal{C}^{2}(\overline{\Omega}))^{2}$ satisfying $\int_{\Omega}u_{\infty}=m$. 
\end{theorem}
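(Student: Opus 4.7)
Writing the first equation of \eqref{SSP} in divergence form as $\nabla\cdot(v_\infty^\chi\,\nabla(u_\infty/v_\infty^\chi))=0$ in $\Om$, with no-flux condition $v_\infty^\chi\,\nabla(u_\infty/v_\infty^\chi)\cdot\nu=0$ on $\partial\Om$, and testing with $u_\infty/v_\infty^\chi$ (which is smooth since we seek $v_\infty>0$ in $\Ombar$), integration by parts yields
\[
 \io v_\infty^\chi\,\abs{\nabla(u_\infty/v_\infty^\chi)}^2 = 0,
\]
so $u_\infty = A\,v_\infty^\chi$ for a constant $A>0$ (positive because $\io u_\infty=m>0$). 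Substitution into $\Delta v_\infty = u_\infty v_\infty$ produces the scalar Dirichlet problem $\Delta v_\infty = A\,v_\infty^{\chi+1}$ in $\Om$, $v_\infty=v^*$ on $\partial\Om$, with mass constraint $A\io v_\infty^\chi = m$. Setting $\alpha:=A^{1/\chi}$ and $w:=\alpha v_\infty$ rewrites this as the one-parameter family
\[
 \Delta w = w^{\chi+1}\text{ in }\Om,\qquad w = \alpha v^*\text{ on }\partial\Om,
\]
and the task collapses to finding a unique $\alpha>0$ with $\Phi(\alpha):=\io w_\alpha^\chi = m$.

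\textbf{Step 2 (Existence and uniqueness of $w_\alpha$).} For each $\alpha>0$, $\underline w\equiv 0$ and the harmonic extension $\overline w_\alpha$ of $\alpha v^*$ form an ordered pair of sub- and supersolutions, so monotone iteration combined with Schauder regularity produces $w_\alpha\in C^{2+\theta}(\Ombar)$. Rewriting $\Delta w_\alpha = w_\alpha^\chi\cdot w_\alpha$ as $-\Delta w_\alpha + c(x)\,w_\alpha = 0$ with $c:=w_\alpha^\chi\ge 0$, the strong maximum principle together with the strictly positive boundary data forces $w_\alpha>0$ throughout $\Ombar$. Uniqueness at fixed $\alpha$ follows by the same linearization: two positive solutions $w^{(1)},w^{(2)}$ yield a difference $\eta$ satisfying $-\Delta\eta + (\chi+1)\xi(x)^\chi\,\eta = 0$ for some intermediate $\xi>0$, with $\eta|_{\partial\Om}=0$; the maximum principle for linear elliptic operators with nonnegative potential gives $\eta\equiv 0$.

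\textbf{Step 3 (Monotonicity, continuity, and limits of $\Phi$).} Comparison shows that $\alpha\mapsto w_\alpha$ is pointwise strictly increasing: if $\alpha_1<\alpha_2$, then $w_{\alpha_1}$ is a subsolution of the $\alpha_2$-problem (same equation, smaller boundary data), whence $w_{\alpha_1}<w_{\alpha_2}$ in $\Om$ by the strong maximum principle. Hence $\Phi$ is strictly increasing; its continuity in $\alpha$ is a stability statement for the Dirichlet problem, via Schauder estimates. For small $\alpha$ one has $0<w_\alpha\le\alpha H$ where $H$ is the harmonic extension of $v^*$, so $\Phi(\alpha)\to 0$ as $\alpha\to 0^+$. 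Granting the divergence $\Phi(\alpha)\to\infty$ as $\alpha\to\infty$, the intermediate value theorem supplies a unique $\alpha_m>0$ with $\Phi(\alpha_m)=m$, and Step~1 then yields the unique positive $(u_\infty,v_\infty)\in(C^2(\Ombar))^2$.

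\textbf{Main obstacle.} The decisive quantitative step is $\Phi(\alpha)\to\infty$ as $\alpha\to\infty$. For large $\alpha$ the nonlinear absorption drives $w_\alpha$ to zero exponentially fast away from $\partial\Om$, concentrating mass in a boundary layer of width $\sim\alpha^{-\chi/2}$ in which $w_\alpha$ is comparable to $\alpha v^*$; securing the matching lower bound $\Phi(\alpha)\gtrsim\alpha^{\chi/2}$ demands an explicit subsolution with the correct boundary-layer profile. This is where I expect the restriction to $\Om=B_R(0)$ in dimension $d\ge 3$ to play a role: on a ball, a radial subsolution built from the associated one-dimensional boundary-layer ODE (with boundary value $\min_{\partial\Om}v^*$) compares with $w_\alpha$ via the maximum principle without the curvature-dependent complications that would arise in matching inner and outer expansions for general smooth $\Om$ when $d\ge 3$; in the planar case $d=2$, such a barrier is presumably accessible in full generality.
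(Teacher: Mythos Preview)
Your reduction in Step~1 and the rescaling $w_\alpha=\alpha v_\infty$ are correct and in fact streamline the paper's route. The paper works instead with $v_\alpha$ solving $\Delta v=\alpha v^{\chi+1}$, $v|_{\partial\Omega}=v^*$, and proves that $m(\alpha)=\alpha\io v_\alpha^\chi$ is strictly increasing by differentiating in $\alpha$ and then showing, via the maximum principle applied to the equation satisfied by $v_\alpha+\alpha\chi v_\alpha'$, that this combination is nonnegative and $\not\equiv 0$; this costs several lemmas on the existence and regularity of $v_\alpha'$. Your change of variable shifts the $\alpha$-dependence into the boundary data, after which the pointwise comparison $w_{\alpha_1}<w_{\alpha_2}$ for $\alpha_1<\alpha_2$ is immediate. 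The paper does carry out exactly this rescaling (its $z_\alpha=\alpha^{1/\chi}\underline{v_\alpha}$, solving $\Delta z_\alpha=z_\alpha^{\chi+1}$ with $z_\alpha|_{\partial\Omega}=\alpha^{1/\chi}\underline v^*$), but only later and only for the auxiliary problem with constant boundary value $\underline v^*=\min_{\partial\Omega}v^*$; applying it from the start, as you do, is a genuine simplification.

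The gap is the one you flag yourself: you do not prove $\Phi(\alpha)\to\infty$. For $d\ge 3$ your intuition is on target---the paper compares $z_\alpha$ from below with the explicit power-law barrier $\underline{z_\alpha}(x)=\gamma|x|^\beta$ on $B_R(0)$, where $\beta=\alpha^{1/2}(\underline v^*)^{\chi/2}R$ and $\gamma=\alpha^{1/\chi}\underline v^* R^{-\beta}$, and then integrates $\underline{z_\alpha}^{\,\chi}$ over a thin boundary shell; no ODE boundary-layer matching is needed. For $d\le 2$, however, your guess is off: the paper does \emph{not} build a barrier. It argues by contradiction: if $\underline m(\alpha)=\io z_\alpha^\chi$ stayed bounded, then the identity $\io|\nabla\log z_\alpha|^2\le \io z_\alpha^{-1}\Delta z_\alpha=\io z_\alpha^\chi$ (the boundary term has the right sign because $z_\alpha$ attains its maximum on $\partial\Omega$) would bound $\|\nabla\log z_\alpha\|_{L^2}$ uniformly in $\alpha$; the Moser--Trudinger inequality then controls $\|z_\alpha\|_{L^{2\chi+2}}$, and Gagliardo--Nirenberg interpolation together with $W^{2,2}$ elliptic regularity for $\Delta z_\alpha=z_\alpha^{\chi+1}$ caps $\|z_\alpha\|_{L^\infty}$---contradicting $z_\alpha|_{\partial\Omega}=\alpha^{1/\chi}\underline v^*\to\infty$. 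The reliance on Moser--Trudinger is precisely what confines this argument to $d\le 2$.
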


\begin{remark}
 While obvious for the case of $d\in\set{1,2}$, it seems worth noting that also for the higher-dimensional case, radial symmetry of the solution (i.e. $v^*$ being constant) is not assumed in Theorem~\ref{THM1}.    
 In fact, the condition that $\Om$ be a ball can be weakened: It is sufficient if $\Om$ lies inside a ball that it shares a small part of its boundary with, see \eqref{domain-condition} below.\\
 For other domains, unique solvability can be asserted for sufficiently small values of $m$ (cf.~Lemma~\ref{LEMDEFM}).
    \end{remark}

\section{Stationary states}

To prove  Theorem~\ref{THM1}, we follow the idea of \cite[Lem.~7.1--7.7]{LW22}. First, we 
reduce \eqref{MODEL_0} into a single equation, see \cite[Lemma 4.1]{BL19} or, alternatively, \cite[Sec. 2]{schaaf} for a different approach. 
\begin{lemma}\label{lem:onlyoneequation}
Let $0<v\in \mathcal{C}^{2}(\overline{\Omega})$. If $u\in\mathcal{C}^{2}(\overline{\Omega})$
satisfies
\begin{equation}\label{NCEQ}
 \left\{
\begin{array}{ll}\vspace{1mm}
0=\nabla \cdot(\nabla u- \chi u\nabla \log v), \qquad\quad& x\in\Omega, \\\vspace{1mm}
\displaystyle   (\nabla  u-  \chi u \nabla \log v)\cdot\nu=0,\qquad\quad& x\in\partial\Omega,
\end{array}
\right. 
\end{equation}
then there exists $\alpha\in \R$ such that
\begin{equation}\label{NCRELATION}
u=\alpha v^{\chi}.
\end{equation}
Conversely, if \eqref{NCRELATION} holds for some $0<v\in \mathcal{C}^{2}(\overline{\Omega})$ and  $\alpha\in \R$, then \eqref{NCEQ} is satisfied.
Furthermore, the signs of $\int_{\Omega}u$ and $\alpha$ coincide.
\end{lemma}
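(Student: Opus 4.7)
The plan is to exploit the standard observation that the flux $\nabla u - \chi u\nabla\log v$ can be written as $v^{\chi}\nabla(u v^{-\chi})$. Setting $w \colonequals u/v^{\chi}$, one checks by direct computation that
\[
v^{\chi}\nabla w \;=\; v^{\chi}\bke{v^{-\chi}\nabla u - \chi u v^{-\chi-1}\nabla v} \;=\; \nabla u - \chi u\nabla \log v,
\]
so that \eqref{NCEQ} becomes the linear divergence-form problem $\nabla\cdot(v^{\chi}\nabla w)=0$ in $\Omega$ with $v^{\chi}\nabla w\cdot\nu = 0$ on $\partial\Omega$. Since $v\in C^{2}(\overline{\Omega})$ is strictly positive on the compactum $\overline{\Omega}$, the coefficient $v^{\chi}$ is bounded away from zero; hence the boundary condition reduces to a homogeneous Neumann condition for $w$, and $w\in C^{2}(\overline{\Omega})$ as well.

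Next I would test the equation against $w$ itself and integrate by parts: using the boundary condition to discard the boundary term,
\[
0 \;=\; -\io w\,\nabla\cdot(v^{\chi}\nabla w) \;=\; \io v^{\chi}\abs{\nabla w}^{2}.
\]
Because $v^{\chi}\ge c>0$ on $\overline{\Omega}$, this forces $\nabla w\equiv 0$, so $w$ is a constant $\alpha\in\R$, i.e.\ $u=\alpha v^{\chi}$, which is \eqref{NCRELATION}. The converse direction is immediate: if $u=\alpha v^{\chi}$, then $\nabla u-\chi u\nabla\log v = v^{\chi}\nabla\alpha = 0$ pointwise, so both the equation and the boundary condition in \eqref{NCEQ} hold trivially.

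Finally, for the sign statement, I would simply integrate \eqref{NCRELATION} to get $\io u = \alpha\io v^{\chi}$, and note that $\io v^{\chi}>0$ by positivity of $v$, so $\io u$ and $\alpha$ have the same sign.

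I do not expect any real obstacle here; the only point that needs a little care is the legitimacy of the integration by parts, which is guaranteed by the $C^{2}(\overline{\Omega})$ regularity of $u$ and $v$ together with the strict positivity of $v$ on $\overline{\Omega}$ (so that $w=uv^{-\chi}$ inherits $C^{2}(\overline{\Omega})$ regularity).
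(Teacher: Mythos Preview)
Your proof is correct and considerably more direct than the paper's. The key difference is that you rewrite the flux as $v^{\chi}\nabla w$ with $w=u v^{-\chi}$ and test the resulting uniformly elliptic equation against $w$ itself, immediately obtaining $\io v^{\chi}|\nabla w|^{2}=0$ and hence $w\equiv\alpha$. The paper instead works with the quantity $u|\nabla\log(u/v^{\chi})|^{2}$, which forces it to restrict to the positivity set $\{u>0\}$, introduce the truncated test function $\psi_{\varepsilon}=\max\{1,u/(\varepsilon v^{\chi})\}\in W^{1,2}(\Omega)$, integrate over the level sets $\{u\ge\varepsilon v^{\chi}\}$, and pass to the limit $\varepsilon\searrow 0$ before invoking continuity to extend to all of $\Omega$. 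Your reformulation bypasses all of this because $w=u v^{-\chi}$ is globally well-defined and $C^{2}$ regardless of the sign of $u$, so no truncation or sign discussion is needed. The paper's logarithmic approach may be closer in spirit to entropy-type identities one would use in the parabolic problem, but for the elliptic statement as given, your argument is both shorter and more transparent; it is essentially the proof of \cite[Lemma~4.1]{BL19} to which the paper itself refers.
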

\begin{proof}
We show that \eqref{NCEQ} implies \eqref{NCRELATION}. The other direction follows from a straightforward computation.
Since \eqref{NCEQ} is invariant under sign change of $u$ (and the claim is trivial if $u\equiv0$), we may assume that $\Om_+=\set{x\in \Om\mid u(x)>0}$ is nonempty. In $\Om_+$, we have (for any $ε>0$, because $∇\logε=0$)
\begin{align}
 u|∇\log \f{u}{v^{χ}}|^2
 &=u(∇\log u-\f{∇v^{χ}}{v^{χ}})\cdot ∇\log \f{u}{v^{χ}}
 =∇u\cdot∇\log \f{u}{v^{χ}} - \f{u}{v^{χ}} ∇v^{χ}\cdot∇\log\f{u}{v^{χ}}\nn\\
 &=∇u\cdot∇\log \f{u}{εv^{χ}} - χv^{χ}∇\log v\cdot∇\f{u}{v^{χ}}
 \label{eq:rewrite-u-nabla-log}
\end{align}
From \eqref{NCEQ} we may conclude that 
\begin{equation}\label{weakform}
 \io ∇u\cdot ∇φ = χ\io u∇\log v\cdot ∇φ \qquad \text{for all }φ\in W^{1,2}(\Om).
\end{equation}
Letting $ψ_{ε}\colonequals \max\set{1,\f{u}{εv^{χ}}}$, we have that $\log ψ_{ε}\in W^{1,2}(\Om)$ and from \eqref{eq:rewrite-u-nabla-log}, \eqref{weakform} 
we thus obtain from integrating over $\Om_{ε}\colonequals\set{u\ge εv^{χ}}$
\newcommand{\ioe}{\int_{\Om_{ε}}}
\begin{align*}
 \ioe u|∇\log \f{u}{v^{χ}}|^2 &= \ioe ∇u\cdot ∇\log \f{u}{εv^{χ}} - \ioe χv^{χ}∇\log v\cdot∇\f{u}{v^{χ}}\\
 &= \io ∇u\cdot ∇\log ψ_{ε} - \ioe χv^{χ}∇\log v\cdot∇\f{u}{v^{χ}}\\
 &= \io χu∇\log v \cdot∇\log ψ_{ε} - \ioe χv^{χ}∇\log v\cdot∇\f{u}{v^{χ}}\\
 &= \ioe χu∇\log v \cdot∇\log \f{u}{v^{χ}} - \ioe χv^{χ}∇\log v\cdot∇\f{u}{v^{χ}}=0
\end{align*}
which shows that $\f{u}{v^{χ}}$ is constant on (each connected component of) $\Om_{ε}$ and thus on $\Om_+$ (after taking $ε\searrow 0$) and hence $u=αv^{χ}$ for some $α\in ℝ$ -- throughout $\Om$, for reasons of continuity.

\end{proof}
The following two lemmas are concerned with existence of solutions of the single equation to which \eqref{SSP} has been reduced by Lemma~\ref{lem:onlyoneequation}.
\begin{lemma}\label{LEM7}
Let   $\alpha_*\ge0$ and $0<v^{*}\in  C^{2+\theta}(\partial \Omega)$ with some $\theta\in(0,1)$.
For any $α\in[0,α_*]$, $0\le v\in \mathcal{C}^{\theta}(\overline{\Omega})$, 
the Dirichlet problem
\[
  \left\{
\begin{array}{ll}\vspace{1mm}
0=\Delta w-\alpha w v^{\chi},\qquad\quad& x\in\Omega, \\\vspace{1mm}
w=v^{*},\qquad\quad& x\in\partial\Omega,
\end{array}
\right. 
\]
 admits a unique solution $w\in \mathcal{C}^{2+\tilde{\theta}}(\overline{\Omega}) $, $\tilde{\theta}=\theta\cdot \min\{1,χ\}\in(0,\theta]$, and $w$ always satisfies  $0< w \le \displaystyle\max_{\partial\Omega}v^{*}$. Moreover,  there exists $C_{1}=C_1(α_*)>0$ such that  for every $α\in[0,α_*]$ and every $v\in \mathcal{C}^{\theta}(\overline{\Omega})$ with $0\le v\le \displaystyle\max_{\partial\Omega}v^{*}$, 
 \begin{equation}\label{HOLDERBD}
 \|w\|_{\mathcal{C}^{\theta}(\overline{\Omega})}\le C_{1}.
 \end{equation}
\end{lemma}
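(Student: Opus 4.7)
The plan is to view the Dirichlet problem as a linear elliptic equation with Hölder continuous coefficient and apply standard theory: existence, uniqueness, and $C^{2+\tilde\theta}$-regularity come from Schauder theory, the pointwise bounds from the maximum principle, and the uniform Hölder estimate \eqref{HOLDERBD} from $L^\infty$-based global regularity. The key preliminary observation is that $v^\chi \in C^{\tilde\theta}(\overline{\Omega})$ with $\tilde\theta = \theta\min(1,\chi)$: since $v \in C^\theta(\overline{\Omega})$ is nonnegative and the scalar map $t \mapsto t^\chi$ is Lipschitz on any bounded interval $[0,M]$ when $\chi \ge 1$ but only $\chi$-Hölder when $\chi \in (0,1)$, composition costs a factor $\min(1,\chi)$ in the Hölder exponent.

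For existence, uniqueness, and $C^{2+\tilde\theta}(\overline{\Omega})$-regularity, I would apply Schauder theory to the linear operator $L := \Delta - \alpha v^\chi$: its zeroth-order coefficient is nonnegative and lies in $C^{\tilde\theta}(\overline{\Omega})$, and the boundary datum satisfies $v^* \in C^{2+\theta}(\partial\Omega) \subset C^{2+\tilde\theta}(\partial\Omega)$. Uniqueness itself is elementary: the difference $\tilde w$ of two solutions satisfies $L\tilde w = 0$ with zero trace, so testing against $\tilde w$ and integrating by parts gives $\int_\Omega |\nabla \tilde w|^2 + \alpha\int_\Omega v^\chi \tilde w^2 = 0$, whence $\tilde w \equiv 0$.

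The pointwise bounds $0 < w \le \max_{\partial\Omega} v^*$ come from two maximum-principle arguments. Testing the equation against the negative part $w_- := \min(w,0) \in H^1_0(\Omega)$ yields $\int_\Omega |\nabla w_-|^2 + \alpha\int_\Omega v^\chi (w_-)^2 = 0$, forcing $w \ge 0$; the equation then reads $\Delta w = \alpha v^\chi w \ge 0$, so $w$ is subharmonic and $\max_{\overline{\Omega}} w = \max_{\partial\Omega} v^*$. Strict positivity in $\Omega$ follows from the strong maximum principle, since if $w$ attained its (then necessarily nonpositive) interior minimum it would be identically zero, contradicting $v^* > 0$ on $\partial\Omega$.

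The main obstacle I anticipate is the uniform estimate \eqref{HOLDERBD}: its constant $C_1$ must be independent of $\|v\|_{C^\theta(\overline{\Omega})}$, so a direct Schauder estimate is unavailable because $\|v^\chi\|_{C^{\tilde\theta}}$ is uncontrolled. The resolution uses the extra hypothesis $0 \le v \le \max_{\partial\Omega} v^*$ from the second half of the statement, which renders the right-hand side $f := \alpha v^\chi w$ bounded in $L^\infty(\Omega)$ by a constant depending only on $\alpha_*$, $\chi$, and $v^*$. Global $W^{2,p}$-theory applied to $\Delta w = f$ with $w|_{\partial\Omega} = v^* \in C^{2+\theta}$ then furnishes a uniform $W^{2,p}(\Omega)$-bound for every finite $p$, and Morrey embedding upgrades this to a uniform $C^{1,\beta}(\overline{\Omega})$-bound for any $\beta \in (0,1)$; since $\theta < 1$, this implies \eqref{HOLDERBD}.
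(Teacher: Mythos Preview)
Your proof is correct and follows essentially the same approach as the paper: Schauder theory for existence and $C^{2+\tilde\theta}$-regularity, the maximum principle for the pointwise bounds, and an $L^\infty$-based elliptic regularity estimate for the uniform bound~\eqref{HOLDERBD}. The only difference is in the last step, where the paper invokes a direct global H\"older estimate (\cite[Thm.~8.33]{GT01}) rather than your route through $W^{2,p}$ and Morrey embedding, but the underlying idea---that only the $L^\infty$ bound on $\alpha v^\chi w$ is needed, not its H\"older norm---is identical.
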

\begin{proof}
According to  \cite[Thm~6.14]{GT01}, the Dirichlet problem has a unique solution $w\in \mathcal{C}^{2+\tilde{\theta}}(\overline{\Omega}) $, $\tilde{\theta}\in(0,\theta]$.  Then, the maximum principle yields $0< w \le \displaystyle\max_{\partial\Omega}v^{*}$. The H\"older bound \eqref{HOLDERBD} results from the   elliptic regularity theory (eg. \cite[Thm~8.33]{GT01}) and the boundedness of $\alpha w v^{\chi}$.
 \end{proof}
\begin{lemma}\label{LEM8}
Let   $\alpha\ge 0$. The Dirichlet problem 
\begin{equation}\label{LEM8EQ}
  \left\{
\begin{array}{ll}\vspace{1mm}
0=\Delta v-\alpha v^{\chi+1},\qquad\quad& x\in\Omega, \\\vspace{1mm}
v=v^{*},\qquad\quad& x\in\partial\Omega,
\end{array}
\right. 
\end{equation}
 admits a unique solution $v\in \mathcal{C}^{2}(\overline{\Omega}) $, and   $v$ always satisfies $0< v \le \displaystyle\max_{\partial\Omega}v^{*}$ and 
\[
 \|v\|_{\mathcal{C}^{\theta}(\overline{\Omega})}\le C_{1},
\]
 where $C_{1}>0$ is the number from \eqref{HOLDERBD}.  
\end{lemma}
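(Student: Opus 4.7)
The strategy is to realize a solution of \eqref{LEM8EQ} as a fixed point of the linear solution map supplied by Lemma~\ref{LEM7}. Writing $T(v)=w$ for the unique $w$ obtained from Lemma~\ref{LEM7} with input $v$, any fixed point $v=T(v)$ automatically satisfies $\Delta v=\alpha v\cdot v^{\chi}=\alpha v^{\chi+1}$ with $v|_{\partial\Omega}=v^{*}$, which is exactly \eqref{LEM8EQ}. I would therefore set up a Schauder fixed-point argument for $T$.

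Take $\alpha_{*}=\alpha$ in Lemma~\ref{LEM7} and set $M\colonequals\max_{\partial\Omega}v^{*}$. The natural arena is the closed convex set
\[
K\colonequals\set{v\in\mathcal{C}^{\theta}(\overline{\Omega}):0\le v\le M,\ \|v\|_{\mathcal{C}^{\theta}(\overline{\Omega})}\le C_{1}}\subset\mathcal{C}^{\theta}(\overline{\Omega}),
\]
which is nonempty (it contains $v\equiv 0$). The three ingredients of Schauder's theorem are then delivered essentially for free by Lemma~\ref{LEM7}: the bounds $0<T(v)\le M$ and $\|T(v)\|_{\mathcal{C}^{\theta}}\le C_{1}$ give $T(K)\subset K$; the image $T(K)$ is uniformly bounded in $\mathcal{C}^{2+\tilde{\theta}}(\overline{\Omega})$ by elliptic regularity, hence precompact in $\mathcal{C}^{\theta}(\overline{\Omega})$; and continuity of $T\colon K\to K$ follows by a standard subsequence argument, since for $v_{n}\to v$ in $\mathcal{C}^{\theta}$ the images $w_{n}=T(v_{n})$ are bounded in $\mathcal{C}^{2+\tilde\theta}$, so each subsequence accumulates in $\mathcal{C}^{2}(\overline{\Omega})$ at a solution of the Lemma~\ref{LEM7} problem for $v$, which must coincide with $T(v)$ by its uniqueness. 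Schauder's theorem then furnishes a fixed point $v=T(v)\in K$, which inherits from $T(v)$ the stated regularity $v\in\mathcal{C}^{2+\tilde{\theta}}(\overline{\Omega})\subset\mathcal{C}^{2}(\overline{\Omega})$, the positivity $v>0$, the upper bound $v\le M$ and the Hölder estimate.

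For uniqueness I would test $\Delta(v_{1}-v_{2})=\alpha(v_{1}^{\chi+1}-v_{2}^{\chi+1})$ against $v_{1}-v_{2}$ for two solutions $v_{1},v_{2}$; the boundary terms vanish because $v_{1}=v_{2}=v^{*}$ on $\partial\Omega$, so integration by parts yields
\[
-\int_{\Omega}|\nabla(v_{1}-v_{2})|^{2}=\alpha\int_{\Omega}(v_{1}^{\chi+1}-v_{2}^{\chi+1})(v_{1}-v_{2})\ge 0,
\]
the inequality on the right by monotonicity of $s\mapsto s^{\chi+1}$ on $[0,\infty)$. Both sides must therefore vanish, and the shared boundary values force $v_{1}\equiv v_{2}$. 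I do not foresee a substantive obstacle: Lemma~\ref{LEM7} carries essentially all the PDE work, and the only minor subtlety is continuity of $v\mapsto v^{\chi}$ from $\mathcal{C}^{\theta}\cap\{v\ge 0\}$ to $\mathcal{C}^{\tilde\theta}$ when $\chi<1$, which is precisely what the reduced exponent $\tilde\theta=\theta\min\{1,\chi\}$ in Lemma~\ref{LEM7} accounts for.
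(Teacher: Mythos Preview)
Your proof is correct and follows the same approach as the paper: Schauder's fixed point theorem applied to the solution operator of Lemma~\ref{LEM7} on the very set you call $K$ (the paper's $X$). You supply considerably more detail than the paper does---in particular an explicit uniqueness argument via testing with $v_1-v_2$, which the paper omits---but the strategy is identical.
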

\begin{proof}
As in \cite[Lem.~7.3]{LW22}, the existence result can be deduced if we use    
Schauder's fixed point theorem with Lemma~\ref{LEM7} 
in
$
X=\{f\in \mathcal{C}^{\theta}(\overline{\Omega})\,|\,0\le f\le \displaystyle\max_{\partial\Omega}v^{*},\, \|f\|_{\mathcal{C}^{\theta}(\overline{\Omega})}\le C_{1}\}$.  
The estimates result from Lemma~\ref{LEM7}.
\end{proof}

Over the course of the next lemmas, we will treat the dependence of the solution $v$ on the parameter $α$. A first observation in this regard consists in the following monotonicity statement.

\begin{lemma}\label{LEM4}
Let $v_{\alpha_{1}}$ and $v_{\alpha_{2}}$ be two solutions of \eqref{LEM8EQ} with $\alpha=\alpha_{1}\ge0$ and $\alpha_{2}\ge0$, respectively. If  $\alpha_{1}\ge  \alpha_{2}$, then $v_{\alpha_{1}}\le v_{\alpha_{2}}$.
\end{lemma}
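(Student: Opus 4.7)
The plan is to run a standard comparison argument on the difference $w \colonequals v_{\alpha_1} - v_{\alpha_2}$, linearising the nonlinearity $t\mapsto t^{\chi+1}$ by the mean value theorem so as to recast the equation for $w$ as a linear elliptic inequality to which the weak maximum principle applies.

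First, I would note that $w=0$ on $\partial\Omega$ (both solutions take the same boundary data $v^*$) and that by Lemma~\ref{LEM8} both $v_{\alpha_1}$ and $v_{\alpha_2}$ are strictly positive on $\overline\Omega$, so that
\[
 c(x) \colonequals (\chi+1)\int_0^1 \bke{s\,v_{\alpha_1}(x)+(1-s)\,v_{\alpha_2}(x)}^{\chi}\,ds \ge 0 \quad\text{on }\overline\Omega,
\]
satisfies $v_{\alpha_1}^{\chi+1}(x)-v_{\alpha_2}^{\chi+1}(x)=c(x)\,w(x)$. Subtracting the two equations \eqref{LEM8EQ} then yields
\[
 \Delta w = \alpha_1 v_{\alpha_1}^{\chi+1}-\alpha_2 v_{\alpha_2}^{\chi+1} = \alpha_1 c(x)\,w + (\alpha_1-\alpha_2)\,v_{\alpha_2}^{\chi+1},
\]
so that with $q(x)\colonequals \alpha_1 c(x)\ge 0$ and $f(x)\colonequals (\alpha_1-\alpha_2)v_{\alpha_2}^{\chi+1}\ge 0$ the difference satisfies
\[
 \Delta w - q(x)\,w = f(x) \ge 0 \quad\text{in }\Omega,\qquad w=0\quad\text{on }\partial\Omega.
\]

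Since $q\ge 0$ and $w$ is a classical $C^2(\overline\Omega)$ function (by Lemma~\ref{LEM8}), the weak maximum principle for elliptic operators with nonpositive zero-order coefficient (cf. \cite[Thm~3.1]{GT01}) implies that $w$ cannot attain a positive interior maximum exceeding $\max_{\partial\Omega}w=0$, and therefore $w\le 0$ in $\overline\Omega$, i.e.\ $v_{\alpha_1}\le v_{\alpha_2}$. The trivial case $\alpha_1=\alpha_2$ (in particular $\alpha_1=0$) is covered by uniqueness of the solution given in Lemma~\ref{LEM8}.

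No step looks genuinely hard; the only item that deserves care is ensuring that the linearising coefficient $c(x)$ is well-defined and nonnegative, which is where the strict positivity of both $v_{\alpha_i}$ (from Lemma~\ref{LEM8}) together with $\chi>0$ enters, so that $t\mapsto t^{\chi+1}$ is $C^1$ along the segment between the two solutions.
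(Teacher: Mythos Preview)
Your proof is correct and follows essentially the same comparison/maximum-principle route as the paper. The only cosmetic difference is that the paper, instead of linearising via the mean-value integral, restricts to the set $D=\{v_{\alpha_1}>v_{\alpha_2}\}$ and observes directly that $\Delta(v_{\alpha_1}-v_{\alpha_2})\ge \alpha_2(v_{\alpha_1}^{\chi+1}-v_{\alpha_2}^{\chi+1})\ge 0$ there, forcing $D=\emptyset$; this amounts to the same maximum-principle argument packaged slightly differently.
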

\begin{proof}
Let $D:=\{x\in \Omega\,|\, v_{\alpha_{1}}> v_{\alpha_{2}}\}$ and $\delta v:=v_{\alpha_{1}}-v_{\alpha_{2}}$. Then, 
\[
  \left\{
\begin{array}{ll}\vspace{1mm}
\Delta \delta v=  \alpha_{1}v_{\alpha_{1}}^{\chi+1}-\alpha_{2}v_{\alpha_{2}}^{\chi+1}\ge \alpha_{2}(v_{\alpha_{1}}^{\chi+1}-v_{\alpha_{2}}^{\chi+1})\ge0,\qquad\quad& x\in D, \\\vspace{1mm}
\delta v=0,\qquad\quad& x\in\partial D.
\end{array}
\right. 
\]
By the maximum principle,   $ \delta v\le 0$  in $D$ and thus, $D=\emptyset$. Therefore, $v_{\alpha_{1}}\le v_{\alpha_{2}}$ in $\Omega$.
\end{proof}

\begin{lemma}\label{lem:islipschitz}
 Let $f\in C^2([a,b])$. Then $g\colon [a,b]^2\to ℝ$,
 \[
  g(u,v)=\begin{cases}
          \f{f(u)-f(v)}{u-v},&u\neq v,\\
          f'(u),&u=v,
         \end{cases}
 \]
 is Lipschitz continuous. 
\end{lemma}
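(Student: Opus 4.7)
The plan is to use the integral representation
\[
 g(u,v) = \int_0^1 f'\bigl((1-t)v + tu\bigr)\, dt,
\]
which I would verify in two steps. For $u\neq v$ this is a direct consequence of the fundamental theorem of calculus applied to $s\mapsto f((1-s)v+su)$, since the resulting integral evaluates to $\frac{f(u)-f(v)}{u-v}$. For $u=v$ the integrand is constantly $f'(u)$, matching the definition of $g$ on the diagonal. This single formula therefore captures both cases and removes the need to treat the diagonal separately.

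Next I would exploit $f\in C^2([a,b])$ to conclude that $f'$ is Lipschitz on $[a,b]$ with constant $L\colonequals \|f''\|_{L^\infty([a,b])}$. For two pairs $(u,v),(u',v')\in[a,b]^2$ the convex combinations $(1-t)v+tu$ and $(1-t)v'+tu'$ lie in $[a,b]$ for every $t\in[0,1]$, so subtracting the integral representations yields
\[
 |g(u,v)-g(u',v')|\le L\int_0^1\bigl|(1-t)(v-v')+t(u-u')\bigr|\,dt\le \tfrac{L}{2}\bigl(|u-u'|+|v-v'|\bigr),
\]
which is the desired Lipschitz estimate.

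There is essentially no obstacle here; the only point requiring minor care is making sure the integral formula genuinely extends continuously across the diagonal $u=v$, but this is immediate from continuity of $f'$ and dominated convergence (or simply from uniform continuity of $f'$ on the compact interval $[a,b]$). Consequently the proof reduces to the two short computations sketched above.
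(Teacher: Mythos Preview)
Your proof is correct and takes a different route from the paper. The paper argues that $g$ is differentiable on the convex domain $[a,b]^2$ and that its partial derivatives are bounded (via Taylor's theorem and boundedness of $f''$), whence Lipschitz continuity follows. Your integral representation $g(u,v)=\int_0^1 f'((1-t)v+tu)\,dt$ bypasses differentiability of $g$ entirely and yields the Lipschitz estimate directly from the Lipschitz continuity of $f'$. This is cleaner: the paper's approach requires checking that $g$ is actually differentiable on the diagonal and that the partial derivatives extend continuously there, a step the paper handles rather tersely (and with a minor slip---the value of $\partial_u g$ on the diagonal is $\tfrac12 f''(u)$, not $f''(u)$, though this does not affect boundedness). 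Your argument also produces the explicit Lipschitz constant $\tfrac12\|f''\|_{L^\infty}$ with respect to the $\ell^1$-norm on $[a,b]^2$.
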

\begin{proof}
 The domain $[a,b]^2$ is convex, $g$ is differentiable, and its derivatives are bounded, e.g.
 \[
  \f{∂g}{∂u}(u,u)= f''(u);\quad \f{∂g}{∂u}(u,v)=\f{f'(u)(u-v)-(f(u)-f(v))}{(u-v)^2} \quad (u\neq v), 
 \]
 where boundedness is easily obtained from (Taylor's theorem and) boundedness of $f''$.
\end{proof}

\begin{lemma}\label{lem:twosidedboundsforvalpha}
 Let $α_*\in(0,∞)$, let $v^*\in C^{2+θ}(∂\Omega)$, $v^*>0$. Then there is $C>0$ such that for any $α\in[0,α_*]$, the solution $v_{α}$ of \eqref{LEM8EQ} satisfies 
 \[
  \max v^* \ge v_{α}(x)\ge C \qquad \text{for all } x\in\Ombar 
 \]
\end{lemma}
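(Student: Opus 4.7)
My plan is to prove the positive lower bound by comparison with a linear auxiliary problem, since the upper bound $v_α\le \max_{\partial\Omega} v^*$ is already contained in Lemma~\ref{LEM8}.

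Set $M:=\max_{\partial\Omega}v^*$, $m^*:=\min_{\partial\Omega}v^*>0$, and $k:=α_*M^χ$. Because Lemma~\ref{LEM8} guarantees $0<v_α\le M$ for every $α\in[0,α_*]$, and since $α\le α_*$, I would first observe that
\[
 Δ v_α - k v_α \;=\; (α v_α^χ-k)\,v_α \;\le\; 0 \qquad \text{in }\Omega,
\]
thereby presenting $v_α$ as a supersolution of the linear operator $Δ-k$ with the sign-compatible zero-order term.

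Next, let $φ\in C^2(\Ombar)$ be the unique classical solution of $Δφ-kφ=0$ in $\Omega$ with $φ=m^*$ on $\partial\Omega$ (standard linear elliptic theory, e.g. \cite{GT01}). Applied to $-φ$, the strong maximum principle yields $φ>0$ on $\Ombar$: a non-positive interior minimum of $φ$ would force $φ$ to be constant, contradicting the strictly positive boundary datum. Set $C:=\min_{\Ombar}φ>0$. The difference $w:=v_α-φ$ then satisfies $Δw-kw\le 0$ in $\Omega$ together with $w=v^*-m^*\ge 0$ on $\partial\Omega$, and the weak maximum principle for $Δ-k$ (whose zero-order coefficient $-k$ has the correct sign) gives $w\ge 0$. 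Hence $v_α\ge φ\ge C$ uniformly in $α\in[0,α_*]$, which is the desired bound. As an alternative route one could also reduce to bounding $v_{α_*}$ from below using Lemma~\ref{LEM4}, but the direct comparison avoids this detour.

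The only genuinely non-routine step is the simple inequality $v_α^{χ+1}\le M^χ v_α$ that converts the nonlinear right-hand side into a linear one at the cost of invoking the uniform upper bound $M$; everything else reduces to applying the weak and strong maximum principles for $Δ-k$ with $k\ge 0$ in the standard way, and I do not anticipate any serious obstacle.
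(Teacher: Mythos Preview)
Your argument is correct. You linearise the nonlinear term via $v_α^{χ+1}\le M^{χ}v_α$, build a single linear comparison function $φ$ solving $Δφ-kφ=0$ with $φ=m^*$ on $∂\Omega$, and then invoke the weak and strong maximum principles for the operator $Δ-k$ (with $-k\le 0$) to conclude $v_α\ge φ\ge C:=\min_{\Ombar}φ>0$ uniformly in $α\in[0,α_*]$. Each step is sound.

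The paper, however, takes precisely the route you dismiss as a ``detour'': it simply uses the monotonicity result of Lemma~\ref{LEM4} to write $v_α\ge v_{α_*}$ for all $α\in[0,α_*]$ and then sets $C=\min_{\Ombar}v_{α_*}>0$, which is positive because $v_{α_*}\in C^2(\Ombar)$ is strictly positive by Lemma~\ref{LEM8}. That proof is two lines. Your approach has the merit of being self-contained (it does not require the nonlinear comparison Lemma~\ref{LEM4}) and of producing an explicit $α$-independent barrier $φ$; but since Lemma~\ref{LEM4} is already available and is needed elsewhere in the paper anyway, the paper's use of it here is the more economical choice.
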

\begin{proof}
 For any $α\in[0,α_*]$ and $x\in\Ombar$, by Lemma~\ref{LEM8} and Lemma~\ref{LEM4} we have 
 \[
  \max v^* \ge v_{α}(x)\ge v_{α_*}(x)\ge \min_{\Ombar} v_{α_*}>0
 \]
 and we set $C=\min_{\Ombar}v_{α_*}$.
\end{proof}

\begin{lemma}\label{lem:quotientishoelder}
 Let $α_*\in(0,∞)$. There is $C>0$ such that the following holds: Whenever $α_1,α_2\in[0,α_*]$ and $v_{α_1}$ and $v_{α_2}$ are the corresponding solutions of \eqref{LEM8EQ}, the function $z\colon \Ombar\to ℝ$ defined by 
 \[
  z(x)=\begin{cases}
        \frac{v_{α_2}^{χ+1}(x)-v_{α_1}^{χ+1}(x)}{v_{α_2}(x)-v_{α_1}(x)},&v_{α_1}(x)\neq v_{α_2}(x),\\
        (χ+1)v_{α_1}^{χ}(x),&\text{otherwise},
       \end{cases}
 \]
 satisfies $\jnorm[C^{θ}(\Ombar)]{z}\le C$.
\end{lemma}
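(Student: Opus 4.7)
The plan is to factor through Lemma~\ref{lem:islipschitz} applied to $f(t)=t^{χ+1}$, combined with the uniform two-sided bounds on $v_α$ and their uniform Hölder bound. The point is that although $f(t)=t^{χ+1}$ need not lie in $C^2$ at $t=0$ (for $χ\in(0,1)$), the functions $v_{α_1},v_{α_2}$ stay inside a fixed compact interval bounded away from zero, where $f$ is smooth.

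First I would invoke Lemma~\ref{lem:twosidedboundsforvalpha} to produce a constant $c_*>0$, depending only on $α_*$ and $v^*$, such that $v_{α}(x)\in[c_*,\max v^*]$ for every $α\in[0,α_*]$ and every $x\in\Ombar$. On the closed interval $I\colonequals[c_*,\max v^*]$, the function $f(t)=t^{χ+1}$ is $C^2$ (since $c_*>0$), so Lemma~\ref{lem:islipschitz} gives a Lipschitz constant $L=L(α_*)$ for
\[
 g(u,v)=\begin{cases}\f{f(u)-f(v)}{u-v},&u\neq v,\\ f'(u),&u=v,\end{cases}
\]
on $I\times I$. By construction $z(x)=g\bigl(v_{α_2}(x),v_{α_1}(x)\bigr)$ for every $x\in\Ombar$.

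Second, I would combine this with the uniform Hölder bound $\jnorm[C^{θ}(\Ombar)]{v_{α_i}}\le C_1$ from Lemma~\ref{LEM8} (with $C_1$ independent of $α_i\in[0,α_*]$). For any $x,y\in\Ombar$,
\[
 |z(x)-z(y)|\le L\bke{|v_{α_2}(x)-v_{α_2}(y)|+|v_{α_1}(x)-v_{α_1}(y)|}\le 2LC_1|x-y|^{θ},
\]
which controls the Hölder seminorm. For the sup norm, the mean value theorem applied to $f$ on $I$ shows $|g(u,v)|\le \max_{t\in I}|f'(t)|=(χ+1)(\max v^*)^{χ}$, hence $\|z\|_{L^\infty(\Ombar)}$ is bounded by the same quantity. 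Adding the two bounds produces the desired constant $C=C(α_*,v^*,χ)$.

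The main obstacle, as in several of the surrounding lemmas, is obtaining a uniform positive lower bound on $v_α$ so that $f(t)=t^{χ+1}$ stays in a region where $f''$ is bounded; that obstacle is precisely what Lemma~\ref{lem:twosidedboundsforvalpha} (via the monotonicity Lemma~\ref{LEM4}) takes care of. Once the values of $v_{α_1},v_{α_2}$ are trapped in a fixed compact subinterval of $(0,\infty)$, the rest is a purely mechanical composition of a Lipschitz map with Hölder-continuous inputs.
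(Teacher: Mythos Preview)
Your proof is correct and follows exactly the paper's approach: invoke the uniform two-sided bounds from Lemma~\ref{lem:twosidedboundsforvalpha}, apply Lemma~\ref{lem:islipschitz} to $f(t)=t^{\chi+1}$ on the resulting compact interval, and compose the Lipschitz map $g$ with the uniformly $C^{\theta}$ functions $v_{\alpha_i}$ from Lemma~\ref{LEM8}. The paper states this in one sentence; you have merely spelled out the Lipschitz-after-H\"older composition and the sup-norm bound explicitly.
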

\begin{proof}
 With $C>0$ from Lemma~\ref{lem:twosidedboundsforvalpha}, we apply Lemma~\ref{lem:islipschitz} to $f\colon [C,\max v^*]\to ℝ$, $f(s)=s^{χ+1}$ and use \eqref{HOLDERBD} and the fact that the concatenation of a Lipschitz continuous function with Hölder continuous functions remains Hölder continuous. 
\end{proof}

\begin{lemma}
Let $\alpha_{1}>0$. The function
$
v_{\alpha_{1}}'= \displaystyle\lim_{\alpha_{2}\rightarrow \alpha_{1}} \frac{v_{\alpha_{2}}-v_{\alpha_{1}}}{\alpha_{2}-\alpha_{1}}
$ exists,  belongs to $\mathcal{C}^{2}{(\overline{\Omega})}$,  and satisfies
\begin{equation}\label{VPRIMEEQ}
  \left\{
\begin{array}{ll}\vspace{1mm}
\Delta v_{\alpha_{1} }'= v_{\alpha_{1} }^{\chi+1}+\alpha_{1}( \chi+1)v_{\alpha_{1} }^{\chi} v_{\alpha_{1} }',\qquad\quad& x\in\Omega, \\\vspace{1mm}
v_{\alpha_{1} }'=0,\qquad\quad& x\in\partial\Omega.
\end{array}
\right. 
\end{equation}
\end{lemma}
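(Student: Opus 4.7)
Fix $\alpha_1 > 0$ and pick some $\alpha_* > \alpha_1$. For $h \neq 0$ small enough that $\alpha_1 + h \in (0, \alpha_*]$, set
\[
w_h \colonequals \frac{v_{\alpha_1 + h} - v_{\alpha_1}}{h}.
\]
Subtracting the two instances of \eqref{LEM8EQ} for $v_{\alpha_1}$ and $v_{\alpha_1+h}$ and dividing by $h$ yields
\[
\Delta w_h - \alpha_1 z_h w_h = v_{\alpha_1 + h}^{\chi + 1} \quad \text{in } \Omega, \qquad w_h = 0 \quad \text{on } \partial \Omega,
\]
where $z_h$ is the function from Lemma~\ref{lem:quotientishoelder} applied with $\alpha_2 = \alpha_1 + h$. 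Because $s \mapsto s^{\chi+1}$ is nondecreasing on $[0, \max v^*]$, one has $z_h \ge 0$, so the maximum principle is available for the left-hand operator.

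My plan has two parts: (i) extract uniform estimates on $\{w_h\}$ that allow passage to a limit, and (ii) identify that limit uniquely. For (i), Lemma~\ref{lem:quotientishoelder} provides a uniform $\mathcal{C}^{\theta}(\Ombar)$ bound on $z_h$, and Lemma~\ref{LEM8} does the same for $v_{\alpha_1+h}$, hence (composing with the Lipschitz function $s \mapsto s^{\chi+1}$ on $[0, \max v^*]$) for $v_{\alpha_1+h}^{\chi+1}$. Standard Schauder estimates for the linear Dirichlet problem (e.g. \cite[Thm~6.14]{GT01}) then yield a uniform $\mathcal{C}^{2+\tilde{\theta}}(\Ombar)$ bound on $w_h$.

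For (ii), I would first show $v_{\alpha_1 + h} \to v_{\alpha_1}$ in $\mathcal{C}^{2}(\Ombar)$ as $h \to 0$: the uniform $\mathcal{C}^{2+\tilde{\theta}}$ bounds from Lemmas~\ref{LEM7}--\ref{LEM8} together with Arzel\`a--Ascoli produce $\mathcal{C}^{2}$-convergent subsequences, each limit solves \eqref{LEM8EQ} with $\alpha = \alpha_1$ and so equals $v_{\alpha_1}$ by the uniqueness part of Lemma~\ref{LEM8}, and a standard subsequence argument promotes this to full convergence. Consequently $v_{\alpha_1 + h}^{\chi + 1} \to v_{\alpha_1}^{\chi + 1}$ uniformly and, via Lemma~\ref{lem:islipschitz}, $z_h \to (\chi + 1) v_{\alpha_1}^{\chi}$ uniformly on $\Ombar$ as well. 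Combined with the uniform $\mathcal{C}^{2 + \tilde{\theta}}$ bound on $w_h$, from any sequence $h_n \to 0$ I can extract a $\mathcal{C}^{2}(\Ombar)$-convergent sub-subsequence whose limit $v_{\alpha_1}'$ solves \eqref{VPRIMEEQ}. Since the coefficient $\alpha_1(\chi + 1) v_{\alpha_1}^{\chi}$ is nonnegative, \eqref{VPRIMEEQ} has at most one $\mathcal{C}^{2}(\Ombar)$ solution by the maximum principle, so the limit is independent of the subsequence and in fact the entire family $w_h$ converges in $\mathcal{C}^{2}(\Ombar)$ to the desired $v_{\alpha_1}'$.

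The main obstacle I foresee is establishing enough continuity of $\alpha \mapsto v_\alpha$ to pass to the limit in the nonlinear terms $v_{\alpha_1 + h}^{\chi + 1}$ and in the difference quotient $z_h$; this is precisely what Lemmas~\ref{lem:islipschitz} and \ref{lem:quotientishoelder} have been prepared for. Once the $\mathcal{C}^{2}$-continuity of $\alpha \mapsto v_{\alpha}$ is in hand, the rest reduces to routine linear elliptic regularity combined with uniqueness for the limit problem \eqref{VPRIMEEQ}.
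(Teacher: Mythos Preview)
Your proposal is correct and follows essentially the same approach as the paper: derive the linear Dirichlet problem satisfied by the difference quotient, use Lemma~\ref{lem:quotientishoelder} to control the coefficient in $C^{\theta}$ uniformly, apply Schauder estimates to bound $w_h$ in $C^{2+\tilde\theta}$, and then combine Arzel\`a--Ascoli with uniqueness of \eqref{VPRIMEEQ} (via the maximum principle) to obtain convergence of the full family. You spell out more explicitly the continuity of $\alpha\mapsto v_\alpha$ and the convergence $z_h\to(\chi+1)v_{\alpha_1}^{\chi}$, but these are exactly the steps the paper compresses into a single sentence; the only cosmetic difference is that the paper cites \cite[Thm.~6.6]{GT01} for the a~priori Schauder estimate rather than \cite[Thm.~6.14]{GT01}.
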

\begin{proof}
Note that $\tilde{v}=\frac{v_{\alpha_{2}}-v_{\alpha_{1}}}{\alpha_{2}-\alpha_{1}}$ satisfies
\[
  \left\{
\begin{array}{ll}\vspace{1mm}
\Delta \tilde{ v}=   v_{\alpha_{2}}^{\chi+1}+ \alpha_{1} \frac{v_{\alpha_{2}}^{\chi+1}-v_{\alpha_{1}}^{\chi+1}}{v_{\alpha_{2}}-v_{\alpha_{1}}}\tilde{ v},\qquad\quad& x\in\Omega, \\\vspace{1mm}
\tilde{ v}=0,\qquad\quad& x\in\partial\Omega.
\end{array}
\right. 
\]
We fix $α_*>0$. From Lemma~\ref{lem:quotientishoelder} and elliptic Schauder estimates (\cite[Thm. 6.6]{GT01}) we find $C>0$ such that $\jnorm[C^{2+θ}(\Ombar)]{\tilde{ v}}\le C$ for any such function $\tilde{ v}$ with $α_1,α_2\in[0,α_*]$. By the Arzelà-Ascoli theorem and uniqueness of the solutions of \eqref{VPRIMEEQ}, we find that $\lim_{α_2\to α_1} \f{v_{α_2}-v_{α_1}}{α_2-α_1}$ exists in $C^2(\Ombar)$ and solves \eqref{VPRIMEEQ}.
\end{proof}

\begin{lemma}\label{LEM6}
Let $\alpha>0$. Then, $v_{α}+αχv_{α}'\ge 0$ in $\Omega$ and $v_{α}+αχv_{α}'\not\equiv 0$.
\end{lemma}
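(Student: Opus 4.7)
The plan is to introduce the auxiliary function $w := v_{α} + αχ v_{α}'$, show it satisfies a linear homogeneous elliptic equation $\Delta w = c(x)\, w$ with a nonnegative zero-order coefficient $c$ and strictly positive Dirichlet data, and then read off the desired inequality from the maximum principle; the nontriviality will follow from the positive boundary values.

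Concretely, combining $\Delta v_{α} = αv_{α}^{χ+1}$ from \eqref{LEM8EQ} with the equation \eqref{VPRIMEEQ} for $v_{α}'$, a direct calculation gives
\[
 \Delta w = αv_{α}^{χ+1} + αχ\bigl(v_{α}^{χ+1} + α(χ+1)v_{α}^{χ}v_{α}'\bigr) = α(χ+1)v_{α}^{χ}\bigl(v_{α} + αχ v_{α}'\bigr) = α(χ+1)v_{α}^{χ}\,w
\]
in $\Omega$, while on $\partial\Omega$ the identities $v_{α} = v^{*}$ and $v_{α}' = 0$ give $w = v^{*} > 0$. Thus $w$ solves the linear Dirichlet problem $\Delta w - α(χ+1)v_{α}^{χ}w = 0$ in $\Omega$, $w = v^{*}$ on $\partial\Omega$.

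Since $α>0$, $χ>0$ and, by Lemma~\ref{lem:twosidedboundsforvalpha}, $v_{α}$ is bounded below by a positive constant on $\Ombar$, the coefficient $α(χ+1)v_{α}^{χ}$ is strictly positive. The weak maximum principle applied to $-w$ (e.g.\ \cite[Cor.~3.2]{GT01}) then yields $w \ge 0$ in $\Omega$; in fact Hopf's lemma together with the strong maximum principle even gives $w > 0$ throughout $\Ombar$. The nontriviality $w \not\equiv 0$ is immediate from $w|_{\partial\Omega} = v^{*} > 0$.

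The only genuinely nontrivial step is spotting the algebraic cancellation that makes the cross-term $α^{2}χ(χ+1)v_{α}^{χ}v_{α}'$ combine with $α(1+χ) v_{α}^{χ+1}$ into the factored form $α(χ+1)v_{α}^{χ}\,w$; once this identity is in hand, the argument reduces to a routine application of the maximum principle, and I do not anticipate any further obstacle.
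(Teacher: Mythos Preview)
Your argument is correct and follows essentially the same route as the paper: both derive the linear equation $\Delta w = \alpha(\chi+1)v_{\alpha}^{\chi}w$ with boundary data $v^{*}$ for $w=v_{\alpha}+\alpha\chi v_{\alpha}'$ and conclude via the maximum principle. The only cosmetic difference is that the paper prefaces the computation by noting (via Lemma~\ref{LEM4}) that $v_{\alpha}'\le 0$, which is not logically needed for the maximum-principle step; your appeal to Lemma~\ref{lem:twosidedboundsforvalpha} for a positive lower bound on $v_{\alpha}$ is likewise harmless but unnecessary, since $c\ge 0$ already suffices.
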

\begin{proof}
Lemma~\ref{LEM4} shows the nonpositivity of $\alpha \chi v_{\alpha}'$. Since
\begin{equation}\label{ADDEQ}
  \left\{
\begin{array}{ll}\vspace{1mm}
\Delta (v_{\alpha}+\alpha \chi v_{\alpha }')= \alpha (\chi+1)v_{\alpha}^{\chi}(v_{\alpha}+\alpha \chi v_{\alpha }'),\qquad\quad& x\in\Omega, \\\vspace{1mm}
(v_{\alpha}+\alpha \chi v_{\alpha }')=v^{*},\qquad\quad& x\in\partial\Omega,
\end{array}
\right. 
\end{equation}
 the maximum principle yields $(v_{\alpha}+\alpha \chi v_{\alpha }')\ge0$. Moreover, $(v_{\alpha}+\alpha \chi v_{\alpha }')\not\equiv 0$ due to $v^{*}>0$.
\end{proof}

At this point we can conclude that $α$ and the total mass (of $u$) directly correspond to each other: 

\begin{lemma}\label{LEMDEFM}
There is $m_{\infty}\in(0,\infty]$ such that the map $m: (0,\infty)\rightarrow (0,m_\infty)$ defined by 
\begin{equation}\label{def:m}
m(\alpha)=\int_{\Omega} \alpha v^{\chi}_{\alpha}
\end{equation} is bijective.
\end{lemma}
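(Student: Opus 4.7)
The plan is to show $m$ is continuously differentiable and strictly increasing on $(0,\infty)$ with limits $0$ and $m_\infty$ at the endpoints, whereupon bijectivity onto $(0,m_\infty)$ follows from the intermediate value theorem.

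First, I would differentiate $m(\alpha)=\alpha\int_\Omega v_\alpha^\chi$ using the $C^2(\Ombar)$-differentiability of $\alpha\mapsto v_\alpha$ established in the previous lemma. Pulling the derivative inside the integral gives
\[
m'(\alpha)=\int_\Omega v_\alpha^\chi + \alpha\chi\int_\Omega v_\alpha^{\chi-1}v_\alpha' = \int_\Omega v_\alpha^{\chi-1}\bigl(v_\alpha+\alpha\chi v_\alpha'\bigr).
\]
By Lemma~\ref{lem:twosidedboundsforvalpha}, $v_\alpha^{\chi-1}$ is continuous and strictly positive on $\Ombar$, while Lemma~\ref{LEM6} guarantees that the factor $v_\alpha+\alpha\chi v_\alpha'$ is nonnegative and not identically zero. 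Being continuous, it must be strictly positive on a nonempty open subset of $\Omega$, so $m'(\alpha)>0$ for every $\alpha>0$. In particular $m$ is continuous and strictly increasing, hence injective.

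Next, I would identify the endpoint behaviour. For $\alpha\to 0^+$, the bound $v_\alpha\le \max_{\partial\Omega}v^*$ from Lemma~\ref{LEM8} immediately yields
\[
0<m(\alpha)\le \alpha\bigl(\max_{\partial\Omega}v^*\bigr)^{\chi}|\Omega| \xrightarrow[\alpha\to 0^+]{} 0.
\]
For $\alpha\to\infty$, since $m$ is increasing, the limit $m_\infty\colonequals\lim_{\alpha\to\infty}m(\alpha)\in(0,\infty]$ exists, and is strictly positive because $m$ already attains positive values. Continuity and strict monotonicity of $m$ on $(0,\infty)$ together with these two limits then imply, via the intermediate value theorem, that $m$ maps $(0,\infty)$ bijectively onto $(0,m_\infty)$.

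I do not expect a substantive obstacle here, as the heavy lifting (differentiability of $\alpha\mapsto v_\alpha$, the sign of $v_\alpha+\alpha\chi v_\alpha'$, and the pointwise lower bound on $v_\alpha$) has been carried out in the preceding lemmas. The most delicate point is the passage from the weak inequality in Lemma~\ref{LEM6} to strict positivity of $m'(\alpha)$, which however is immediate from continuity since a nonnegative continuous function that is not identically zero is strictly positive on an open set of positive measure.
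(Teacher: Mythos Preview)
Your proposal is correct and follows essentially the same route as the paper: compute $m'(\alpha)=\int_\Omega v_\alpha^{\chi-1}(v_\alpha+\alpha\chi v_\alpha')$, invoke Lemma~\ref{LEM6} for strict positivity, use the upper bound on $v_\alpha$ to get $m(\alpha)\to 0$ as $\alpha\to 0^+$, and define $m_\infty$ as the monotone limit at infinity. Your added justification that a continuous nonnegative non-identically-zero integrand forces $m'(\alpha)>0$ is a welcome elaboration of a step the paper leaves implicit.
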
 
\begin{proof}
Due to 
\[
    m'(\alpha)=\int_{\Omega} (v_{\alpha}^{\chi}+\alpha\chi v_{\alpha}^{\chi-1}v_{\alpha}')  =\int_{\Omega} v_{\alpha}^{\chi-1}\bke{ v_{\alpha}+\alpha\chi v_{\alpha}'   }
\]
and Lemma~\ref{LEM6}, we have  $m'(\alpha)>0$. Thus, $m$ is injective.

Next, we prove the surjectivity of $m$. Note from $0\le \int_{\Omega} \alpha v^{\chi}_{\alpha}\le \alpha   (\max_{\partial\Omega}v^{*} )^{\chi}|\Omega|$ that 
\[
\lim_{\alpha\rightarrow 0} m(\alpha)=0.
\]
If we set $m_{\infty}\colonequals \lim_{α\to \infty} m(α)\in(0,∞]$, which exists due to the monotonicity of $m$, surjectivity follows from continuity.
\end{proof}

In order to obtain solvability not only for small bacterial mass, it would be desirable to have that $m_{\infty}=\infty$. 

Note that the reasoning at this point has to become more involved compared to stationary solutions of the chemotaxis--consumption system \eqref{general-consumptionsystem} with nonsingular sensitivity $S(v)=const$, where mass can be computed according to $\tilde{m}(α)=\io αe^{v_{α}}$ and $\lim_{α\to\infty} \tilde{m}(α)=\infty$ is immediate (cf.~\cite[Lemma~3.15]{BL19}).

To this end, for any $α>0$ we introduce $\uv$ as solution of 
\begin{equation}\label{defunderlinedv}
  \left\{
\begin{array}{ll}\vspace{1mm}
 Δ\uv = α\uv^{χ+1}\qquad\quad&\text{in } \Om, \\\vspace{1mm}
 \uv = \uv[]^* \colonequals \inf_{∂\Om} v^*\qquad\quad& \text{in } ∂\Omega.
\end{array}
\right. 
\end{equation}
We moreover abbreviate 
\begin{equation}\label{def:z}
z_{α}=α^{\f1{χ}}\uv, 
\end{equation} 
so that 
\begin{equation}\label{eq:z}
  \left\{
\begin{array}{ll}\vspace{1mm}
 Δz_{α} = z_{α}^{χ+1}\qquad\quad&\text{in } \Om, \\\vspace{1mm}
 z_{α} =\alpha^{\frac{1}{\chi}}\uv[]^*\qquad\quad& \text{in } ∂\Omega.
\end{array}
\right. 
\end{equation}
and let 
\begin{equation}\label{def:mbar}
 \underline{m}(α)=\io α\uv^{χ} = \io z_{α}^χ.
\end{equation}

\begin{lemma}\label{LEM9}
For any $α>0$, there is a unique $\uv\in \mathcal{C}^2(\Ombar)$ solving \eqref{defunderlinedv}.
It satisfies $\underline{v_{\alpha}}\le v_{\alpha}$ in $\Omega$. Moreover,   if  $\alpha_{1}\le  \alpha_{2}$, then $\alpha_{1} \underline{v_{\alpha_{1}}}^{\chi}\le \alpha_{2} \underline{v_{\alpha_{2}}}^{\chi}$ in $\Omega$, so that $\underline{m}$ is increasing.
 \end{lemma}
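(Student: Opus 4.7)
The plan is to handle the three claims in order, each by invoking an earlier result or by a direct comparison-principle argument modelled on Lemma~\ref{LEM4}.

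\textbf{Existence and uniqueness of $\uv$.} Since $\uv[]^* = \inf_{∂Ω} v^*$ is a positive constant and hence in particular lies in $C^{2+θ}(∂Ω)$, Lemma~\ref{LEM8} applies verbatim to \eqref{defunderlinedv} (with the constant boundary datum $\uv[]^*$ replacing $v^*$). This yields existence of a unique $\uv\in C^2(\Ombar)$ together with the bound $0<\uv\le \uv[]^*\le \max_{∂Ω}v^*$.

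\textbf{Comparison $\uv\le v_α$.} I would mimic the proof of Lemma~\ref{LEM4}. Set $D=\{x\in Ω\mid \uv(x)>v_{α}(x)\}$ and $δ=\uv-v_{α}$. On $D$, using $\uv>v_{α}\ge 0$,
\[
 Δδ = α\bke{\uv^{χ+1}-v_{α}^{χ+1}} \ge 0,
\]
and on $∂D$ we have $δ=0$: at interior boundary points by definition, and at points of $∂Ω\cap∂D$ since $\uv = \uv[]^*=\inf_{∂Ω}v^*\le v^*=v_{α}$. The maximum principle forces $δ\le 0$ on $D$, so $D=\emptyset$ and $\uv\le v_{α}$ in $Ω$.

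\textbf{Monotonicity of $α\uv^{χ}$.} Here the rescaling $z_{α}=α^{1/χ}\uv$ introduced in \eqref{def:z} is crucial: it removes $α$ from the equation so that only the boundary datum carries the $α$-dependence. Given $0<α_1\le α_2$, the functions $z_{α_1}$, $z_{α_2}$ both solve $Δz=z^{χ+1}$ in $Ω$ with boundary values $α_i^{1/χ}\uv[]^*$, and $α_1^{1/χ}\uv[]^*\le α_2^{1/χ}\uv[]^*$. On $D=\{z_{α_1}>z_{α_2}\}$,
\[
 Δ(z_{α_1}-z_{α_2})=z_{α_1}^{χ+1}-z_{α_2}^{χ+1}\ge 0,
\]
while $z_{α_1}-z_{α_2}\le 0$ on $∂D$ (vanishing on $∂D\cap Ω$ by definition and being nonpositive on $∂D\cap∂Ω$ by the ordering of the boundary data). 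The maximum principle again gives $D=\emptyset$, hence $z_{α_1}\le z_{α_2}$ in $Ω$. Since $z_{α}^{χ}=α\uv^{χ}$, this is precisely $α_1\uv[α_1]^{χ}\le α_2\uv[α_2]^{χ}$, and integrating yields monotonicity of $\underline{m}$ as defined in \eqref{def:mbar}.

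No step presents a substantial obstacle; the only mild subtlety is to remember that on $∂D$ the function $δ$ (resp.\ $z_{α_1}-z_{α_2}$) need not vanish identically, but is nonpositive, which is all that is needed for the weak maximum principle on the open set $D$.
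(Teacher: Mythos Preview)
Your proof is correct and follows essentially the same approach as the paper: existence and uniqueness via Lemma~\ref{LEM8}, the inequality $\uv\le v_{α}$ by a comparison argument (the paper simply invokes ``the comparison principle''), and the monotonicity of $α\uv^{χ}$ via the rescaling $z_{α}=α^{1/χ}\uv$ and a maximum-principle argument on $\{z_{α_1}>z_{α_2}\}$ exactly as in the paper. Your observation that on $∂D$ the difference need only be nonpositive (rather than zero) is a welcome clarification that the paper leaves implicit.
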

 \begin{proof}
Lemma~\ref{LEM8} shows the existence and uniqueness of $\underline{v_{\alpha}}\in \mathcal{C}^{2}(\overline{\Omega})$. The comparison principle yields   $\underline{v_{\alpha}}\le v_{\alpha}$ in $\Omega$. 
  
Let $\underline{v_{\alpha_{1}}}$ and $\underline{v_{\alpha_{2}}}$ be two solutions with $\alpha=\alpha_{1}\ge0$ and $α=\alpha_{2}\ge0$, respectively. 
We show 
\begin{equation}\label{COMPZ}
\alpha_{1} \underline{v_{\alpha_{1}}}^{\chi}\le \alpha_{2} \underline{v_{\alpha_{2}}}^{\chi}\,\,\mbox{ in  }\,\,\Omega \quad\mbox{ if } \,\,  \alpha_{1}\le \alpha_{2}.
\end{equation}
We take $z_{α}$ from \eqref{def:z} and let
$D:=\{x\in \Omega\,|\, z_{\alpha_{1}}> z_{\alpha_{2}}\}$ and $\delta z:=z_{\alpha_{1}}-z_{\alpha_{2}}$. Then, by \eqref{eq:z}, 
\[
  \left\{
\begin{array}{ll}\vspace{1mm}
\Delta \delta z=  z_{\alpha_{1}}^{\chi+1}-z_{\alpha_{2}}^{\chi+1}\ge0,\qquad\quad& x\in D, \\\vspace{1mm}
\delta z=(\alpha_{1}^{\frac{1}{\chi}}-\alpha_{2}^{\frac{1}{\chi}})\uv[]^*\le 0,\qquad\quad& x\in\partial D,
\end{array}
\right. 
\]
and thus, as in the proof of Lemma~\ref{LEM4} we have $z_{\alpha_{1}}\le z_{\alpha_{2}}$ in $\Omega$. 
Namely, $\alpha_{1} \underline{v_{\alpha_{1}}}^{\chi}\le \alpha_{2} \underline{v_{\alpha_{2}}}^{\chi}$.
 \end{proof}
 
 \begin{lemma}\label{LEM10a}
If  $d\in \set{1,2}$, then 
\begin{equation}\label{subsolunbdd}
\lim_{\alpha \rightarrow \infty}\underline{m}(\alpha)= \infty.
\end{equation}
 \end{lemma}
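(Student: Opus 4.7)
The plan is to estimate $\underline{m}(α)$ from below via an integral identity followed by a capacity-type argument.

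First, I would multiply the PDE $\Delta z_α = z_α^{χ+1}$ by $z_α^{-1}$ (legitimate since $z_α\in \mathcal{C}^2(\Ombar)$ is strictly positive) and integrate by parts. Because $z_α$ is subharmonic with constant boundary value $α^{1/χ}\uv[]^*$, it attains its maximum on $∂\Om$ and $∂_ν z_α\ge 0$ there, whence
\[
 \underline{m}(α)=\io z_α^χ=\io z_α^{-1}\Delta z_α=\int_{∂\Om}z_α^{-1}∂_ν z_α+\io|∇\log z_α|^2\ge \io|∇\log z_α|^2.
\]
It thus suffices to show that $\io|∇\log z_α|^2\to∞$ as $α\to∞$.

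To this end, I observe that $\log z_α|_{∂\Om}=\f{\log α}{χ}+\log\uv[]^*=:M_α$ diverges, whereas a uniform (in $α$) interior upper bound on $z_α$ is available from a Keller-Osserman estimate $z_α(x)\le C(χ)\,d(x,∂\Om)^{-2/χ}$, obtained by comparison with the explicit radial blow-up solution on a ball of radius $\f{d(x,∂\Om)}{2}$. On any fixed ball $B\Subset\Om$ one therefore has $\log z_α\le L$ uniformly in $α$. For $α$ large enough that $M_α>L$, the cutoff
\[
 ϕ_α:=\min\set{1,\f{(M_α-\log z_α)_+}{M_α-L}}\in H^1_0(\Om)
\]
satisfies $ϕ_α = 1$ on $B$ and $|∇ϕ_α|\le (M_α-L)^{-1}|∇\log z_α|$, so that
\[
 \mathrm{cap}(B,\Om)\le \io|∇ϕ_α|^2 \le \f{\underline{m}(α)}{(M_α-L)^2},
\]
and hence $\underline{m}(α)\ge (M_α-L)^2\,\mathrm{cap}(B,\Om)\to\infty$.

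The main obstacle I anticipate is the uniform interior bound: for $d\in\set{1,2}$ this can be produced in an elementary way --- by direct integration of the ODE in the case $d=1$, and by comparison with radial barriers in $d=2$ --- which presumably accounts for the dimension restriction in the lemma, with the higher-dimensional case handled separately under the radial-symmetry assumption.
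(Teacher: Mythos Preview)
Your argument is correct and takes a genuinely different route from the paper's.

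The paper argues by contradiction: assuming $M\colonequals\lim_{\alpha\to\infty}\underline{m}(\alpha)<\infty$, it derives the same identity to obtain $\io|\nabla\log z_\alpha|^2\le M$ and $\io\log(z_\alpha/C_1)\le M/(\chi C_1^{\chi})$, then feeds these into the Moser--Trudinger inequality to bound $\jnorm[\Lom{2\chi+2}]{z_\alpha}$ uniformly in $\alpha$, and finally combines Gagliardo--Nirenberg with an elliptic $W^{2,2}$ estimate to bound $\jnorm[\Lom{\infty}]{z_\alpha}$, contradicting the divergent boundary value $\alpha^{1/\chi}\uv[]^*$. The use of Moser--Trudinger is what confines that proof to $d\le 2$; the higher-dimensional case is treated in a separate lemma via explicit radial subsolutions, at the cost of the domain condition \eqref{domain-condition}.

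Your capacity argument is both more direct and more elementary, and---contrary to your closing paragraph---it is \emph{not} dimension-restricted. The Keller--Osserman bound $z_\alpha(x)\le C(\chi,d)\,d(x,\partial\Om)^{-2/\chi}$ holds in every dimension $d\ge 1$: the radial large solution of $\Delta U=U^{\chi+1}$ on a ball exists whenever $\chi>0$, and comparison with it yields the uniform interior upper bound with a constant depending only on $\chi$ and $d$. The relative capacity $\mathrm{cap}(B,\Om)$ is likewise positive in every dimension whenever $B\Subset\Om$ has positive measure (by Poincar\'e's inequality in $H^1_0(\Om)$). Hence your proof in fact establishes $\underline{m}(\alpha)\to\infty$ for arbitrary bounded smooth domains $\Om\subset\R^d$, $d\ge1$, which is strictly stronger than both Lemma~\ref{LEM10a} and Lemma~\ref{LEM10b} and would remove the ball hypothesis from Theorem~\ref{THM1} altogether. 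The restriction $d\in\set{1,2}$ in the lemma is an artefact of the paper's Moser--Trudinger approach, not of any genuine obstruction in your argument.
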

\begin{proof}
 From Lemma~\ref{LEM9}, we note that
 $
 \underline{m}'(\alpha)=\frac{d}{d\alpha }\int_{\Omega} z_{\alpha}^{\chi}\ge0$ and hence $M\colonequals\lim_{α\to \infty} \underline{m}(α)\in(0,∞]$ exists.
Suppose \eqref{subsolunbdd} does not hold, i.e., 
$
M=\lim_{\alpha \rightarrow \infty}\underline{m}(\alpha)< \infty$. 
Let $\alpha \ge  \alpha_{1} = 1$. By the maximum principle and Lemma~\ref{LEM9}, 
there exists  $C_{1}>0$ such that 
$  C_1\le z_{\alpha_{1}}(x)\le z_{\alpha}(x) \le \alpha^{\frac{1}{\chi}}\uv[]^*$ for all $x\in\overline{\Omega}$ and all $α\ge 1$. The last of these inequalities shows that $∂_{ν}z_{α}\ge 0$ on $∂\Om$; accordingly, for every $α\ge 1$, 
\begin{align}
 \int_{\Omega }|\nabla \log z_{\alpha}|^{2}\le
\int_{\partial \Omega} \frac{1}{z_{\alpha}}\frac{\partial z_{\alpha}}{\partial \nu}+\int_{\Omega }|\nabla \log z_{\alpha}|^{2}
&=\int_{\Omega }\Delta \log z_{\alpha}+\int_{\Omega }|\nabla \log z_{\alpha}|^{2}\nn\\
& = \io \f{Δz_{α}}{z_{α}}
=\int_{\Omega }z_{\alpha}^{\chi}=\underline{m}(\alpha)\le M.\label{eq:intnablalog}
\end{align}
Moreover, due to the fact that $\log y \le \frac{1}{\chi}y^{\chi}$ for all $y\ge0$, 
\begin{equation}\label{eq:intlog}
 \io \log\f{z_{α}}{C_1}\le \f{1}{χC_1^{χ}}\io z_{α}^{χ}\le \f{M}{χC_1^{χ}}.
\end{equation}
Note also from the Moser-Trudinger inequality that there exist positive constants $C_{2}$, $C_{3}$ satisfying 
\begin{equation}\label{MTINEQ}
\int_{\Omega} e^{|\varphi|}\le C_{2}\exp\{ C_{3}\int_{\Omega} |\nabla \varphi|^{2}+|\varphi|   \}\quad \mbox{ for all}\quad \varphi\in W^{1,2}(\Omega).
\end{equation}
Setting $p=2χ+2$, from \eqref{MTINEQ}, \eqref{eq:intnablalog} and \eqref{eq:intlog} we obtain that for every $α\ge 1$, 
\begin{align*}
\frac{1}{C_{1}^{p} }\int_{\Omega}   z_{\alpha}^{p}=\int_{\Omega} e^{ \log (( {z_{\alpha}/C_{1})^{p})     }}
&\le C_{2}\exp\{ C_{3}\int_{\Omega} |\nabla \log ((  z_{\alpha}/C_{1})^{p})    |^{2}+|\log (( z_{\alpha}/C_{1})^{p})    |   \}\\
&=C_{2}\exp\{ C_{3}p^{2}\int_{\Omega} |\nabla \log   z_{\alpha}     |^{2}+  C_{3}p\int_{\Omega} \log ( z_{\alpha}/C_{1})        \}\\
&\le C_{2}\exp\{ C_{3}  p^{2}M+ C_{3}p \frac{M}{\chi C_{1}^{\chi}}   \}, 
\end{align*}
in particular, there is $C_4>0$ such that $\jnorm[\Lom p]{z_{α}}\le C_4$ for all $α\in[1,\infty)$.
From the Gagliardo-Nirenberg interpolation inequality, we obtain $C_5>0$ such that
\[
\| z_{\alpha}\|_{L^{\infty}(\Omega)}\le C_5\left( \| z_{\alpha}\|_{L^{2\chi+2}(\Omega)}^{\theta} \|D^{2} z_{\alpha}\|_{L^{2}(\Omega)}^{1-\theta} + \| z_{\alpha}\|_{L^{2\chi+2}(\Omega)}\right),\qquad \theta=\frac{(4-d)(2\chi+2)}{(4-d)(2\chi+2)+2d}
\]
for every $α\in[1,\infty)$. Using \eqref{eq:z} and the elliptic regularity result \cite[Lem. 9.17]{GT01}, we also note that with some $C_6>0$, 
\[
\|D^{2} z_{\alpha}\|_{L^{2}(\Omega)}\le \|  z_{\alpha}-\alpha^{\frac{1}{\chi}}\uv[]^*\|_{W^{2,2}(\Omega)} \le C_6 \|  z_{\alpha}^{\chi+1}\|_{L^{2}(\Omega)} = C_6\jnorm[\Lom p]{z_{α}}^{χ+1}
\]
holds for every $α\in[1,\infty)$. 
It follows that 
\[
 \alpha^{\frac{1}{\chi}}\uv[]^* = \jnorm[\Lom{\infty}]{z_{α}} \le C_5(C_4^{\theta}(C_6C_4)^{1-\theta}+C_4)
\]
for every $α\in[1,\infty)$, which for large $α$ 
constitutes a contradiction, thus proving \eqref{subsolunbdd}.
%
%
\end{proof}

Due to its reliance on the Trudinger--Moser inequality, the proof presented for Lemma~\ref{LEM10a} is inherently (at most) two-dimensional.
In higher-dimensional domains, we resort to explicit subsolutions. In order to compute these, let us consider radially symmetric functions on $B_R\subset ℝ^d$.

\begin{lemma}\label{lem:subsolution}
 Let $d\ge 2$, $R>0$, $α>0$. The function 
 \[
  \uz(x)=γ|x|^{β}, \qquad x\in \overline{B_R(0)}\subset ℝ^d, 
 \]
with $β=\alpha^{\frac{1}{2}}(\uv[]^*)^{\frac{\chi}{2}}R$ and $γ=\alpha^{\frac{1}{\chi}}\uv[]^*R^{-β}$ satisfies 
\begin{equation}\label{subsolution}
 -Δ\uz \le - \uz^{χ+1} \quad\text{ in } B_R(0) \qquad \text{ and } \qquad 
 \uz \le α^{\f1{χ}}\underline{v_{\alpha}} \quad \text{ in } \overline{B_R(0)}.
\end{equation}
\end{lemma}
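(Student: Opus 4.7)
The plan is to verify the two inequalities in \eqref{subsolution} one at a time. First I establish the pointwise subsolution inequality, which is a direct computation since $\uz$ is smooth away from the origin. Second, I compare $\uz$ with the radial solution $z_{\alpha}=\alpha^{1/\chi}\uv$ from \eqref{def:z} via the classical maximum principle, after noting that $\uv$ is radially symmetric because $\Omega=B_R(0)$ and $\uv[]^*$ is a constant.

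For the first step, the radial Laplacian gives $\Delta \uz(x)=\gamma\beta(\beta+d-2)|x|^{\beta-2}$ for $x\in B_R(0)\setminus\set{0}$, so $-\Delta \uz\le -\uz^{\chi+1}$ reduces to
\[
 \beta(\beta+d-2)\ge \gamma^{\chi}|x|^{\beta\chi+2}.
\]
Since $\beta\chi+2>0$, the right-hand side is maximal at $|x|=R$; by the definition of $\gamma$, that maximum equals $\gamma^{\chi}R^{\beta\chi+2}=\alpha(\uv[]^*)^{\chi}R^{2}$, which in turn equals $\beta^{2}$ by the definition of $\beta$. The remaining inequality $\beta(\beta+d-2)\ge \beta^{2}$ is $\beta(d-2)\ge 0$, which holds for $d\ge 2$ and $\beta>0$.

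For the second step, rotational invariance of \eqref{defunderlinedv} together with the uniqueness statement in Lemma~\ref{LEM8} forces $\uv$, and hence $z_{\alpha}$, to be radial. On $\partial B_R$ one computes $\uz=\gamma R^{\beta}=\alpha^{1/\chi}\uv[]^*=z_{\alpha}$, so the continuous function $w\colonequals \uz-z_{\alpha}$ vanishes there and satisfies $w(0)=-z_{\alpha}(0)<0$ (since $\uv>0$ on $\overline{B_R}$ by Lemma~\ref{LEM8}). If $w$ were positive somewhere, its maximum would be attained at some point $x_{0}\in B_R\setminus\set{0}$; there, Step~1 combined with $\uz(x_0)>z_{\alpha}(x_0)>0$ would yield $\Delta w(x_0)\ge \uz^{\chi+1}(x_0)-z_{\alpha}^{\chi+1}(x_0)>0$, contradicting the classical maximum principle. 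Hence $w\le 0$ on $\overline{B_R(0)}$, which is the second inequality in \eqref{subsolution}.

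The main technical nuisance is the lack of $C^{2}$-regularity of $\uz$ at the origin when $\beta<2$. The first step circumvents this by asserting the subsolution inequality only on $B_R(0)\setminus\set{0}$, while in the second step the origin is automatically excluded from $\set{w>0}$ by $\uz(0)=0<z_{\alpha}(0)$, so working on the punctured ball is enough for the maximum principle.
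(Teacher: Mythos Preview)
Your argument is correct. Step~1 reproduces the paper's computation exactly. For the second inequality, the paper takes a much shorter route: it simply observes that $\uz(x)=\gamma|x|^{\beta}\le \gamma R^{\beta}=\alpha^{1/\chi}\uv[]^{*}$, i.e.\ bounds $\uz$ by the \emph{constant} $\alpha^{1/\chi}\uv[]^{*}$. (The displayed lemma statement shows $\underline{v_{\alpha}}$ rather than $\uv[]^{*}$, but the paper's proof only establishes the constant bound, and that is all the subsequent Lemma~\ref{lem:greaterthansubsolution} uses for the boundary comparison on $\partial\Omega$.)

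What you do instead is prove the stronger functional comparison $\uz\le z_{\alpha}$ directly via the maximum principle on the punctured ball, effectively merging the second half of Lemma~\ref{lem:subsolution} with Lemma~\ref{lem:greaterthansubsolution} into a single step. Your explicit treatment of the possible singularity of $\uz$ at the origin when $\beta<2$ (excluding $0$ from consideration because $\uz(0)=0<z_{\alpha}(0)$) is a point the paper leaves implicit. The radial symmetry observation about $\uv$ is correct but is not actually used in your comparison argument and could be dropped.
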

\begin{proof}
 Since, $\uz$ is radially symmetric, for $|x|=r\in(0,R)$ we can compute  
 \begin{align*}
 Δ\uz - \uz^{χ+1} &= r^{1-d}(r^{d-1}(\underline{z_{\alpha}})_{r})_{r}-(\underline{z_{\alpha}})^{\chi+1} 
 \\&= r^{1-d}(r^{d-1}(γr^{β})_{r})_{r}-(γr^{β})^{\chi+1}\\
 &= γβr^{1-d}(r^{d+β-2})_{r}-(γr^{β})^{\chi+1}\\
 &= γβ(d+β-2) r^{β-2}-(γr^{β})^{\chi+1}\\
 &= γr^{β} (β(d+β-2)r^{-2}-γ^{χ}r^{βχ})\\
 &\ge γr^{β} (β^2 r^{-2}-γ^{χ}r^{βχ})\\
 &\ge γr^{β} (β^2 R^{-2}-γ^{χ}R^{βχ})\\
 &= γr^{β}(\alpha (\uv[]^*)^{χ}R^2R^{-2} - \alpha (\uv[]^*)^{χ}R^{-βχ}R^{βχ}=0.
\end{align*}
Additionally, $\uz(x)=γ|x|^{β}\le γR^{β}=α^{\f1{χ}}\uv[]^*$.
\end{proof}

\begin{lemma}\label{lem:greaterthansubsolution}
 Let $d\ge 2$ and $\Omega\subseteq B_R(0)$. Then for any $α>0$, $z_{α}\ge \uz$ in $\Om$.
\end{lemma}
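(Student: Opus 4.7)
The plan is to prove this pointwise bound by a straightforward comparison argument, exploiting that $\underline{z_\alpha}$ is a subsolution and $z_\alpha$ is a solution of the same semilinear equation $\Delta u = u^{\chi+1}$, together with the boundary inequality $\underline{z_\alpha}\le \alpha^{1/\chi}\underline{v}^{*}$ supplied by Lemma~\ref{lem:subsolution}, which matches the constant boundary value of $z_\alpha$ prescribed in \eqref{eq:z}.

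I set $w := \underline{z_\alpha} - z_\alpha$ and consider the open set $D := \{x\in \Omega\,:\, w(x) > 0\}$, aiming to show $D=\emptyset$. On $\partial D \cap \Omega$ one has $w = 0$ by continuity, while on $\partial D \cap \partial\Omega$ one has $w \le 0$, since Lemma~\ref{lem:subsolution} gives $\underline{z_\alpha}\le \alpha^{1/\chi}\underline{v}^{*}$ on all of $\overline{B_R(0)}\supseteq\partial\Omega$ and \eqref{eq:z} yields $z_\alpha = \alpha^{1/\chi}\underline{v}^{*}$ on $\partial\Omega$. Inside $D$, combining \eqref{eq:z} with the subsolution inequality from Lemma~\ref{lem:subsolution} gives
\[
 \Delta w \;\ge\; \underline{z_\alpha}^{\chi+1} - z_\alpha^{\chi+1} \;\ge\; 0,
\]
where the last inequality uses the monotonicity of $s\mapsto s^{\chi+1}$ on $[0,\infty)$ together with $\underline{z_\alpha} > z_\alpha \ge 0$ in $D$. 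Hence $w$ is subharmonic in $D$, and the weak maximum principle forces $\sup_{D} w \le \sup_{\partial D} w \le 0$, contradicting the definition of $D$. It follows that $D=\emptyset$, which is the claim.

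The only technical point I anticipate is the limited regularity of $\underline{z_\alpha}(x) = \gamma|x|^{\beta}$ at the origin in the case $0\in\Omega$ and $\beta<2$, where $\Delta \underline{z_\alpha}$ is unbounded. This is easily circumvented: since $\underline{z_\alpha}(0) = 0$ whereas $z_\alpha(0) > 0$ by the strong maximum principle applied to \eqref{eq:z} with positive boundary data, continuity of $w$ guarantees that $D$ stays a positive distance away from the origin. On such a set $\underline{z_\alpha}$ is smooth, so the classical weak maximum principle applies without modification and no distributional formulation is needed.
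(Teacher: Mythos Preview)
Your proof is correct and follows essentially the same comparison-principle approach as the paper: both verify the boundary inequality $\underline{z_\alpha}\le \alpha^{1/\chi}\underline{v}^{*}=z_\alpha$ on $\partial\Omega$ and the differential inequalities $-\Delta\underline{z_\alpha}\le -\underline{z_\alpha}^{\chi+1}$, $-\Delta z_\alpha= -z_\alpha^{\chi+1}$, then conclude by comparison. The paper's proof is a terse two-line version of what you have spelled out via the set $D=\{\underline{z_\alpha}>z_\alpha\}$; your added remark on the possible singularity of $\Delta\underline{z_\alpha}$ at the origin when $\beta<2$ is a legitimate technical point that the paper passes over silently, and your resolution (that $0\notin\overline{D}$ because $\underline{z_\alpha}(0)=0<z_\alpha(0)$) is correct.
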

\begin{proof}
 By \eqref{eq:z} and \eqref{subsolution}, $\uz\le α^{\f1{χ}}\uv[]^*=z_{α}$ on $∂\Om\subset \overline{B_R(0)}$, and in $\Om$ we have $-Δ\uz\le -\uz^{χ+1}$, whereas $-Δz_{α}\ge -z_{α}^{χ+1}$.
\end{proof}

For $R>0$, $δ>0$ and $ω_0\in \mathbb{S}^{d-1}$ we let 
\[
 S_R(δ,ω_0)\colonequals \set{ρω \mid  ω\in \mathbb{S}^{d-1}, |ω-ω_0|<δ, ρ\in(R-δ,R) }
\]
denote a sector around $Rω$ of a spherical shell centered at $0$.

\begin{lemma}\label{LEM10b}
Let $d\ge 2$, $R>0$ and assume that 
\begin{equation}\label{domain-condition}
\text{for some } δ>0 \text{ and } ω\in \mathbb S^{d-1} \text{ we have } S=S_R(δ,ω)\subseteq \Om\subseteq B_R(0). 
\end{equation}
Then $\underline{m}(\alpha)\rightarrow \infty$ as $\alpha\rightarrow \infty$.
\end{lemma}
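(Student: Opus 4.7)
The plan is to exploit the explicit subsolution $\uz$ from Lemma~\ref{lem:subsolution}: combined with $z_α \ge \uz$ from Lemma~\ref{lem:greaterthansubsolution} and the inclusion $S\subseteq \Om$, I obtain
\[
\underline{m}(α) = \io z_α^χ \ge \int_S \uz^χ = α(\uv[]^*)^χ \int_S (|x|/R)^{χβ}\,dx,
\]
where the identity $\uz(x)^χ = α(\uv[]^*)^χ (|x|/R)^{χβ}$ follows immediately from $γ^χ = α(\uv[]^*)^χ R^{-χβ}$. It thus suffices to show that the remaining integral decays no faster than $α^{-1/2}$ as $α\to\infty$.

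For this I pass to spherical coordinates. The sector $S=S_R(δ,ω)$ factors as an angular cap on $\mathbb{S}^{d-1}$, of some positive $(d-1)$-dimensional surface measure $A>0$, times the radial interval $(R-δ,R)$. Since the integrand depends only on $r=|x|$, the substitution $s=r/R$ and elementary integration give
\[
\int_S (|x|/R)^{χβ}\,dx = A R^d \cdot \f{1-(1-δ/R)^{χβ+d}}{χβ+d},
\]
and hence
\[
\underline{m}(α) \ge α(\uv[]^*)^χ A R^d \cdot \f{1-(1-δ/R)^{χβ+d}}{χβ+d}.
\]

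To conclude, I let $α\to\infty$: the exponent $β = α^{1/2}(\uv[]^*)^{χ/2} R$ tends to infinity, so the numerator $1-(1-δ/R)^{χβ+d}$ converges to $1$ (because $1-δ/R\in (0,1)$), while the denominator grows like $χβ\sim α^{1/2}$. The prefactor $α$ therefore dominates and the lower bound grows like $α^{1/2}$, which diverges. The only role of hypothesis~\eqref{domain-condition} is in enabling the restriction to the sector $S$: there, $|x|/R$ is bounded away from $0$, so the very narrow subsolution $γ|x|^β$---which concentrates near $∂B_R$ as $β\to\infty$ and becomes negligible near the origin---is not crushed by the integration. Beyond identifying this geometric point, I expect no real obstacle.
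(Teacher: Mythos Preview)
Your argument is correct and essentially identical to the paper's own proof: both invoke Lemma~\ref{lem:greaterthansubsolution} to reduce to estimating $\int_S \uz^{χ}$, evaluate this integral in spherical coordinates over the sector $S$ (with your $A$ playing the role of the paper's $σ_d$), and observe that the resulting lower bound behaves like $α/β\sim α^{1/2}\to\infty$. One very minor point you leave implicit is that $δ<R$ (so that $1-δ/R\in(0,1)$), but this can always be arranged by shrinking $δ$ and is equally implicit in the paper.
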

\begin{proof}
 Due to Lemma~\ref{lem:greaterthansubsolution}, it is sufficient to show that $\int_S \uz^{χ}\to \infty$ as $α\to \infty$. Denoting by $σ_d$ the $(d-1)$-dimensional surface area of $\overline{S_1(δ,ω)}\cap ∂B_1(0)$, we have 
 \begin{align*}
\int_{\Omega}\underline{z_{\alpha}}^{\chi}&\ge \sigma_d \int_{R-δ}^R γ^{χ}r^{χβ} r^{d-1} dr = \frac{σ_d γ^{χ}}{χβ+d} (R^{χβ+d} -(R-δ)^{χβ+d})\\
&= \frac{\sigma_{d}\alpha(\uv[]^*)^{\chi}R^{-χβ} }{\chi\alpha^{\frac{1}{2}}(\uv[]^*)^{\frac{\chi}{2}}R+d}R^{χβ+d} (1-(1-\f{δ}R)^{χβ+d})\to \infty 
\end{align*}
as $α\to\infty$.
\end{proof}

\begin{lemma}\label{lem:infty}
 If $d\in\set{1,2}$ or $\Om$ satisfies \eqref{domain-condition}, then $m_\infty=\infty$.
\end{lemma}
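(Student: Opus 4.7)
The proof is essentially a direct combination of the preceding lemmas and requires almost no new work. The plan is as follows.

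First, I recall that $m_\infty = \lim_{\alpha\to\infty} m(\alpha)$ was defined in Lemma~\ref{LEMDEFM}, where $m(\alpha) = \int_\Omega \alpha v_\alpha^\chi$. Similarly, $\underline{m}(\alpha) = \int_\Omega \alpha \underline{v_\alpha}^\chi$ is defined in \eqref{def:mbar}. The key comparison is provided by Lemma~\ref{LEM9}, which yields $\underline{v_\alpha} \le v_\alpha$ in $\Omega$ for every $\alpha>0$, and hence
\[
 \underline{m}(\alpha) \;=\; \io \alpha \underline{v_\alpha}^{\chi} \;\le\; \io \alpha v_\alpha^{\chi} \;=\; m(\alpha) \qquad \text{for every }\alpha>0.
\]

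Second, I invoke the dimension-dependent lower bound on $\underline{m}$: if $d\in\set{1,2}$, Lemma~\ref{LEM10a} gives $\underline{m}(\alpha)\to \infty$ as $\alpha\to \infty$, while if $\Omega$ satisfies \eqref{domain-condition}, the same conclusion follows from Lemma~\ref{LEM10b}. In either case, letting $\alpha\to\infty$ in the above inequality yields $\lim_{\alpha\to\infty} m(\alpha) = \infty$, i.e.\ $m_\infty = \infty$, which is the desired statement.

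There is no real obstacle here: the work has already been done in Lemmas~\ref{LEM9}, \ref{LEM10a}, and \ref{LEM10b}. This lemma is merely the bookkeeping step that combines the two cases into a single clean statement that can be used in the proof of Theorem~\ref{THM1}, where surjectivity of $m\colon (0,\infty)\to (0,m_\infty)$ needs to be upgraded to surjectivity onto all of $(0,\infty)$.
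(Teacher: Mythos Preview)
Your proposal is correct and follows essentially the same approach as the paper's own proof: use Lemma~\ref{LEM9} to obtain $m(\alpha)\ge \underline{m}(\alpha)$, and then invoke Lemma~\ref{LEM10a} or Lemma~\ref{LEM10b} to conclude $\underline{m}(\alpha)\to\infty$. The paper's version is just a one-line summary of exactly these steps.
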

\begin{proof}
 From Lemma~\ref{LEM9}, we know that $m(α)\ge \underline{m}(α)$ and Lemma~\ref{LEM10a} and \ref{LEM10b} show that $\lim_{α\to\infty} \underline{m}(α)=\infty$.
\end{proof}

\begin{proof}[Proof of Theorem~\ref{THM1}]
The pair of functions $(u,v)\in (C^2(\Ombar))^2$ satisfies $\io u=m$ and \eqref{SSP} if and only if $u=αv^{χ}$, $m(α)=m$ and $v$ solves \eqref{LEM8EQ} (cf. Lemma~\ref{lem:onlyoneequation} and \eqref{def:m}). 
Unique solvability of \eqref{LEM8EQ} by Lemma~\ref{LEM8} and bijectivity of $m$ from Lemma~\ref{LEMDEFM} and  Lemma~\ref{lem:infty} thus prove Theorem~\ref{THM1}.
\end{proof}

\section{Acknowledgements}
JA acknowledges support of the National Research Foundation (NRF) of Korea (Grant No. RS-2024-00336346).


\end{document}